\renewcommand{\arraystretch}{1.3}
\newtheorem{theorem}{Theorem}[section]
\newtheorem{lemma}[theorem]{Lemma}
\newtheorem{proposition}[theorem]{Proposition}
\newtheorem{theo}{Theorem}
\theoremstyle{remark}
\newtheorem{remark}[theorem]{Remark}
\newtheorem*{claim*}{Claim}
\theoremstyle{definition}
\numberwithin{equation}{section}
\newcommand{\C}{\ensuremath{\mathbb{C}}}
\newcommand{\R}{\ensuremath{\mathbb{R}}}
\newcommand{\g}[1]{\ensuremath{\mathfrak{#1}}}
\DeclareMathOperator{\tr}{tr}
\DeclareMathOperator{\Ad}{Ad}
\DeclareMathOperator{\spann}{span}
\DeclareMathOperator{\diag}{diag}
\DeclareMathOperator{\Ric}{Ric}
\newcommand{\su}[1]{\ensuremath{\mathsf{SU}_{#1}}}
\renewcommand{\u}[1]{\ensuremath{\mathsf{U}_{#1}}}
\renewcommand{\sp}[1]{\ensuremath{\mathsf{Sp}_{#1}}}
\newcommand{\lieG}{{\mathfrak{g}}}
\newcommand{\lieH}{{\mathfrak{h}}}
\newcommand{\lieP}{{\mathfrak{p}}}
\newsavebox{\@brx}
\newcommand{\llangle}[1][]{\savebox{\@brx}{\(\m@th{#1\langle}\)}
	\mathopen{\copy\@brx\kern-0.5\wd\@brx\usebox{\@brx}}}
\newcommand{\rrangle}[1][]{\savebox{\@brx}{\(\m@th{#1\rangle}\)}
	\mathclose{\copy\@brx\kern-0.5\wd\@brx\usebox{\@brx}}}
\begin{document}

\title[Ricci flow in dimension seven and thirteen]{Positive sectional curvature is not preserved under the Ricci flow in dimensions seven and thirteen}

\author[D.~Gonz\'alez-\'Alvaro]{David Gonz\'alez-\'Alvaro}
\address{Universidad Polit\'ecnica de Madrid, Spain}
\email{david.gonzalez.alvaro@upm.es}

\author[M.~Zarei]{Masoumeh Zarei}
\address{Mathematisches Institut, Universit\"at M\"unster, 
 M\"unster, Germany}
\curraddr{Fachbereich Mathematik, Universität Hamburg, Hamburg, Germany}
\email{masoumeh.zarei@uni-hamburg.de}

\begin{abstract}
We prove that there exist $\su{3}$-invariant metrics on Aloff--Wallach spaces $W^7_{k_1, k_2}$, as well as $\su{5}$-invariant metrics on the Berger space $B^{13}$, which have positive sectional curvature and evolve under the Ricci flow to metrics with non-positively curved $2$-planes. 
\end{abstract}

\subjclass[2020]{Primary: 53C21. Secondary: 53E20.}

\keywords{Positive sectional curvature,  Ricci flow, Aloff--Wallach spaces, Berger Space}
\maketitle

\section{Introduction}

The Ricci flow of a Riemannian manifold $(M,g)$ is a one-parameter family of metrics $g(\ell)$ on $M$ satisfying the evolution equation
$$
\frac{\partial}{\partial \ell} g(\ell)=-2\Ric(g(\ell))
$$
with $g(0)=g$. Since its introduction by Hamilton in 1982 \cite{Ham82}, it has been a fundamental question to understand which curvature conditions are preserved under the flow, see \cite{Ni14,Ri14}. 

While positive scalar curvature and positive curvature operator are preserved in all dimensions \cite{Ham82,Ham86, Wi13, BW08}, there exist infinitely many dimensions where certain curvature conditions lying in between are not preserved \cite{AN16,GZ24}.

In this article we are interested in manifolds with positive sectional curvature, denoted by $\sec>0$. Hamilton showed in \cite{Ham82} that $\sec>0$ is preserved in dimensions $2$ and $3$. In contrast, there exist manifolds in dimensions $4,6,12$ and $24$ with metrics of $\sec>0$ which lose this property when evolved under the Ricci flow, see \cite{BeK23,BW07,CW15}. Our main results add dimensions $7$ and $13$ to this list, and provide  the first odd dimensional examples of this phenomenon.
 
\begin{theo}\label{THM:main_thm_AW}
On infinitely many Aloff--Wallach spaces $W_{k_1, k_2}^7$, there exist $\su{3}$-invariant metrics of positive sectional curvature which evolve under the Ricci flow to metrics with non-positively curved $2$-planes.
\end{theo}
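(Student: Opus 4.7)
The plan is to reduce the Ricci flow to an ODE system on the space of $\su{3}$-invariant metrics and to find, within the open cone of positively curved invariant metrics, trajectories that cross its boundary in finite time. Fix $(k_1,k_2)$ with $k_1 k_2 (k_1+k_2) \ne 0$ and $k_1 \ne \pm k_2$, and decompose the isotropy representation of $W^7_{k_1,k_2}$ as $\lieM = \lieM_0 \oplus \lieM_1 \oplus \lieM_2 \oplus \lieM_3$, where $\lieM_0$ is a one-dimensional trivial summand coming from the normalizer of the isotropy circle in a maximal torus, and the $\lieM_i$ are pairwise inequivalent two-dimensional weight spaces. Then any $\su{3}$-invariant metric is specified by four parameters $(x_0,x_1,x_2,x_3)\in(\R_{>0})^4$, the Ricci tensor is diagonal in this splitting with components $r_i(x)$ that are explicit rational functions of $x$ and $(k_1,k_2)$, and the Ricci flow becomes the autonomous system $\dot x_i = -2\, r_i(x)\, x_i$.

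The second ingredient is the explicit description of the positive sectional curvature cone $\Cc\subset(\R_{>0})^4$ that one obtains from the work of Valiev and P\"uttmann. One can write $\Cc$ as a set of polynomial inequalities in $x$, and its boundary $\partial\Cc$ corresponds to metrics on which some two-plane $\sigma_\ast$ becomes flat while all others remain non-negatively curved. I would then select a boundary metric $x_\ast\in\partial\Cc$ adapted to the problem, typically one for which the flat plane is forced by symmetry -- for instance, a mixed plane spanned by one vector from $\lieM_i$ and one from $\lieM_j$ with $i\ne j$ -- so that both the minimizing function $F(x):=\min_\sigma \sec_x(\sigma)$ and its derivative along the flow reduce to tractable rational expressions. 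The goal is to choose $x_\ast$ so that $\frac{d}{d\ell}F(x(\ell))\big|_{\ell=0}<0$, which immediately yields $F(x(\ell))<0$ for small $\ell>0$ and hence a non-positively curved plane after a short time.

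By continuous dependence of the ODE on initial data, there is an open neighborhood $U$ of $x_\ast$ in $(\R_{>0})^4$ whose trajectories leave $\overline{\Cc}$ at time $\ell$, and $U\cap\Cc$ is then a non-empty open set of $\su{3}$-invariant $\sec>0$ metrics on $W^7_{k_1,k_2}$ whose Ricci flow develops a non-positively curved plane in finite time. To upgrade this from one space to infinitely many, I would exploit that $r_i$ and $F$ depend rationally on $(k_1,k_2)$: the strict inequality for the outward derivative at $x_\ast$ is an open condition on the parameters, so it persists for a whole Zariski-open subset of real $(k_1,k_2)$, which contains infinitely many admissible integer pairs producing genuinely different Aloff--Wallach spaces. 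Alternatively, writing $x_\ast$ itself as an explicit function of $(k_1,k_2)$ allows one to check the sign of the outward derivative uniformly for all sufficiently generic parameter pairs.

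The main obstacle will be the curvature and flow computation at the boundary. The sectional curvature of an invariant metric on $W^7_{k_1,k_2}$ is a complicated polynomial in $x$ and in the coordinates of the two-plane, so minimizing it and then differentiating the minimum along the Ricci flow is not a routine calculation. The art lies in choosing $x_\ast$ so that the extremizing plane is essentially unique, forced by symmetry, and insensitive to small perturbations of $x$, so that $F$ is smooth near $x_\ast$ and the outward derivative becomes a concrete rational expression in $(k_1,k_2)$ whose negativity can be certified for infinitely many parameter choices.
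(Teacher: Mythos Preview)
Your outline matches the paper's strategy at the level of architecture: reduce the Ricci flow to an ODE on the four-parameter family of diagonal invariant metrics, use P\"uttmann's description of the positive-curvature cone, find a boundary metric at which the flow points outward, and then invoke continuous dependence on $(k_1,k_2)$ to reach infinitely many spaces. What is missing is precisely the mechanism that makes the boundary computation feasible---the step you flag as ``the main obstacle'' but do not resolve.

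The paper's boundary function is not $F(x)=\min_\sigma\sec_x(\sigma)$ but rather $t_A(\vec s)/t$, where P\"uttmann characterizes $\Omega(k_1,k_2)$ as $\{t<t_A(\vec s)\}$ over a region $\Omega'_{\hat\lambda}$; this is smooth and avoids the non-differentiability of a minimum. However, $t_A$ involves $\tilde A(\vec s)^{-1}$, and the paper notes explicitly that at triples $(s_0,s_1,s_2)$ with pairwise distinct entries this inverse is ``unmanageable'' by hand. Worse, for $(k_1,k_2)\neq(1,1)$ the Ricci flow immediately destroys any coincidence $s_i=s_j$, so one cannot stay on a locus where $\tilde A^{-1}$ simplifies. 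The paper's key insight is to work first on $W^7_{1,1}$, where the three-parameter subfamily $(t,x,s,s)$ with $s_1=s_2$ \emph{is} Ricci-flow invariant (because $r_2=r_3$ there); on this slice $t_A(x,s,s)=x(4s-x)/(3s)$ has a closed form, and one checks directly that $\frac{d}{d\ell}\bigl(t_A/t\bigr)\big|_{\ell=0}<0$ at $(t_A(x,1,1),x,1,1)$ for $x\in(0.8,1)$. Only then does the paper parametrize all spaces by $\xi=k_1/k_2\in(0,1]$, observe that the Ricci eigenvalues and $t_A$ extend smoothly in $\xi$, and use continuous dependence on initial data and on $\xi$ to propagate the strict inequality to all rational $\xi$ sufficiently close to~$1$. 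Your Zariski-open/rational-dependence idea is in the same spirit, but without the reduction to the flow-invariant slice on $W^7_{1,1}$ there is no anchor point at which the sign has actually been established.
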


In fact, we show  that the conclusion of Theorem~\ref{THM:main_thm_AW} holds for all (positively curved) Aloff--Wallach spaces. However, for the latter we can only give a computer-assisted proof. In contrast, the proof of Theorem~\ref{THM:main_thm_AW} is theoretical.

In the particular case of the Aloff--Wallach space $W_{1,1}^7$ we can moreover show that there are $\sec>0$ metrics whose Ricci flow leaves the set of $\sec\geq 0$ metrics. The same statement holds true for the Berger space $B^{13}$.

\begin{theo}\label{THM:main_thm_B13}
There exist $\su{3}$-invariant metrics on the Aloff--Wallach space $W_{1,1}^7$, as well as $\su{5}$-invariant metrics on the Berger space $B^{13}$, which have positive sectional curvature and evolve under the Ricci flow to metrics with negatively curved $2$-planes.
\end{theo}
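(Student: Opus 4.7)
\emph{Proof plan.}
The approach parallels Theorem~\ref{THM:main_thm_AW}. Write $B^{13}=\su{5}/(\sp{2}\cdot\u{1})$, with $\sp{2}$ embedded via its irreducible $5$-dimensional complex representation. The isotropy representation decomposes into pairwise inequivalent irreducible $\sp{2}\cdot\u{1}$-submodules $\lieM=\lieM_1\oplus\cdots\oplus\lieM_r$, so every $\su{5}$-invariant metric has the diagonal form
\[
g_{(x_1,\ldots,x_r)} \;=\; x_1\,\langle\cdot,\cdot\rangle|_{\lieM_1}\oplus\cdots\oplus x_r\,\langle\cdot,\cdot\rangle|_{\lieM_r}
\]
with respect to a fixed bi-invariant background on $\su{5}$. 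Since the Ricci flow preserves the isometry group, it reduces to the ODE system $\dot{x}_i=-2\,r_i(x)\,x_i$ on $\R^r_{>0}$, whose right-hand side is explicitly computable from the structure constants $[ijk]$ of the homogeneous space.

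Next, I would identify the region $\Pp\subset\R^r_{>0}$ of parameters producing $\sec>0$. The normal Berger metric lies in $\Pp$ by the classical result, so $\Pp$ is nonempty. Using the known analyses of positively curved invariant metrics on $B^{13}$, one describes $\Pp$ by explicit polynomial inequalities, and at each point $\bar g\in\partial\Pp$ a distinguished two-plane $\sigma\subset\lieM$ satisfies $\sec_{\bar g}(\sigma)=0$. For such a pair $(\bar g,\sigma)$, compute
\[
\frac{d}{d\ell}\bigg|_{\ell=0}\sec_{g(\ell)}(\sigma)
\]
along the Ricci flow issuing from $\bar g$. Since $g(\ell)$ remains $\su{5}$-invariant and hence diagonal, the evolution of the Riemann tensor is captured entirely by the ODE for the $x_i(\ell)$, making the derivative an explicit rational expression in $\bar x$ and in the coordinates of $\sigma$ with respect to $\lieM=\bigoplus_i\lieM_i$.

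The key step is to show that for a suitably chosen pair $(\bar g,\sigma)$ this derivative is strictly negative. Once that is established, a small inward perturbation $g_0\in\Pp$ of $\bar g$ still has $\sec_{g_0}>0$, while $\sec_{g(\ell)}(\sigma)$ crosses zero in arbitrarily short time along the flow of $g_0$, which yields the theorem.

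The main obstacle is precisely this negativity statement. Compared to the Aloff--Wallach case, $B^{13}$ has a larger, non-abelian isotropy and richer structure constants; moreover, verifying $\sec>0$ for $g_0$ demands controlling the sectional curvature of \emph{all} two-planes, not only those respecting the isotypical decomposition. In particular, the critical plane $\sigma$ at $\partial\Pp$ will typically be mixed across several summands, and must be pinned down explicitly before the variation above can be carried out.
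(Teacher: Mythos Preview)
Your plan follows the right template (look at a boundary metric of the $\sec>0$ cone and show the flow pushes outward), but it rests on two factual misconceptions that make the anticipated difficulties illusory.

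First, the embedding is not via the irreducible $5$-dimensional representation of $\sp{2}$. In the Berger space $B^{13}=\su{5}/(\sp{2}\cdot\mathsf{S}^1)$ one has $\sp{2}\times\mathsf{S}^1\to\u{4}\subset\su{5}$, $(A,\lambda)\mapsto\lambda A$, so $\sp{2}$ sits inside $\su{4}$ in the standard way. Consequently the isotropy representation splits into exactly \emph{two} inequivalent irreducible summands $\lieP_1\oplus\lieP_2$ (of dimensions $5$ and $8$), and every $\su{5}$-invariant metric is described by just two parameters $x_1,x_2$. There is no ``larger, non-abelian'' complexity to worry about.

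Second, P\"uttmann's result gives the positivity region in closed form: $\langle\cdot,\cdot\rangle_{x_1,x_2}$ has $\sec>0$ if and only if $x_1<2x_2$. So $\partial\Pp$ is the single line $x_1=2x_2$, and there is no need to isolate a critical plane $\sigma$ and differentiate $\sec_{g(\ell)}(\sigma)$. The paper simply evaluates the Ricci eigenvalues (already computed by Rodionov) at $(x_1,x_2)=(2,1)$ and checks that
\[
\frac{d}{d\ell}\Big|_{\ell=0}\frac{x_1(\ell)}{2x_2(\ell)}
\;=\;\frac{x_1}{x_2}\,(r_2-r_1)\Big|_{(2,1)}
\;=\;3\;>\;0,
\]
so the flow exits $\Pp$ immediately. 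Your variational setup would also work in principle, but it trades a one-line derivative of a scalar inequality for the substantially heavier task of tracking a specific two-plane through the curvature tensor; and your worry that such a plane is ``mixed across several summands'' dissolves once you know $r=2$ and the boundary condition is linear.
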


The strategy for proving the theorems is as follows. Since the manifolds  under consideration are homogeneous spaces $G/H$, we may restrict our attention to the set of $G$-invariant metrics on them, which remains invariant under the Ricci flow. Moreover, for some spaces, there are subfamilies of $G$-invariant metrics which are  simpler to describe, yet are Ricci flow invariant. In our cases, these metrics are  described by a finite set of positive parameters $x_1,\dots,x_n$, and the Ricci flow is determined by an ODE system $x_i^\prime=-2r_i(x_1,\dots,x_n)x_i$, where $i=1,\dots, n$ and $r_i$ denote the eigenvalues of the Ricci tensor. Next, we need a characterization of those metrics which have $\sec>0$, and of those metrics with negatively curved $2$-planes in the case of Theorem~\ref{THM:main_thm_B13}. For each of the families of metrics we are considering, this characterization has been provided by the work of P\"uttmann in \cite{Pue99}. With all this information at hand, the results follow from analyzing the behavior of a metric on the boundary of the set of $\sec>0$ metrics. Let us give more details for each of the theorems.

Aloff--Wallach spaces, denoted by $W_{k_1,k_2}^{7}$, constitute an infinite family of $7$-dimensional homogeneous spaces of the form $\su{3}/\mathsf{S}^1$ which admit an $\su{3}$-invariant  metric with positive sectional curvature if and only if $k_{1}k_{2}(k_{1}+k_{2})\neq 0$ \cite{Zi07}.  Among this family, the space $W_{1,1}^7$ and those of the form $W_{n,n+1}^7$, for sufficiently large $n$,  satisfy Theorem~\ref{THM:main_thm_AW}. More generally, in any infinite sequence $\{W_{k_1^n,k_2^n}^{7}\}_{n\in\mathbb N}$, with $\frac{k_1^n}{k_2^n}$ converging to $1$, there are infinitely many spaces satisfying Theorem~\ref{THM:main_thm_AW}. Since the cohomology group $H^4(W_{k_1,k_2}^{7},\mathbb Z)$ equals the cyclic group of order $k_{1}^2+k_{2}^2+k_1k_2$, we conclude that there exist infinitely many homotopy types satisfying Theorem~\ref{THM:main_thm_AW}. We remark that, with our strategy, a theoretical proof for all cases seems impractical due to the complexity of the computations. In Section~\ref{S:computer_assisted} we, however, provide a computer-assisted proof to demonstrate that Theorem~\ref{THM:main_thm_AW} in fact holds for all Aloff--Wallach spaces.

Let us discuss the metrics on $W^7_{k_1, k_2}$ which are involved in the proofs. Aloff and Wallach \cite{AW75} were the first to discover, among a two-parameter family of $\su{3}$-invariant metrics on $W_{k_1,k_2}^{7}$,  metrics with positive sectional curvature, provided that $k_{1}k_{2}(k_{1}+k_{2})\neq 0$. Later, P\"uttmann \cite{Pue99} considered a more general four-parameter family of $\su{3}$-invariant metrics, denoted by $\langle\cdot,\cdot\rangle_{t,\vec s}$, where $\vec s=(s_0,s_1,s_2)$, and determined the subset $\Omega(k_1,k_2)$ consisting of those metrics which have $\sec>0$. It is noteworthy to mention here that since $t$ will be used as one of the parameters describing the metrics $\langle\cdot,\cdot\rangle_{t,\vec s}$ throughout this article, we  denote the time parameter in the Ricci flow equation by~$\ell$.

An important feature of the family $\langle\cdot,\cdot\rangle_{t,\vec s}$ is that it remains invariant under the Ricci flow. Moreover, the Ricci flow of a metric $\langle\cdot,\cdot\rangle_{t,\vec s}$ reduces to an ODE system in $4$ variables. In order to find metrics of $\sec>0$ which lose this property under the flow, it is essential to understand the boundary of $\Omega(k_1,k_2)$ and analyze the ODE system at the boundary points. Unfortunately, it appears that describing the boundary of $\Omega(k_1,k_2)$ and, consequently, analyzing the ODE system of a boundary point by hand in this generality is too complicated.

In order to establish a manageable setting, the crucial observation is the existence of a two-parameter and a three-parameter subfamilies of metrics on $W_{1,1}^{7}$ which remain invariant under the flow. This allows us to restrict our exploration within these two families with a twofold advantage.  First, the descriptions of the boundaries of the subsets of $\sec >0$ metrics among the two-parameter and three-parameter families are notably simpler; in fact for the two-parameter family we know the subset of metrics having negatively curved $2$-planes. Second,  the Ricci flow reduces to an ODE system in just two and three variables, respectively. This makes the analysis of the Ricci flow at the boundary points more feasible.  Then, among the two-parameter family we find $\sec>0$ metrics which evolve to metrics with negatively curved $2$-planes; this proves Theorem~\ref{THM:main_thm_B13} for $W_{1,1}^{7}$. For the three-parameter family, we detect $\sec>0$ metrics which evolve to metrics with non-positively curved $2$-planes.

With this finding concerning the three-parameter family of metrics on $W_{1,1}^{7}$ at hand, the proof of Theorem~\ref{THM:main_thm_AW} now follows from a convergence argument, which works for Aloff--Wallach spaces ``sufficiently close'' to $W^7_{1,1}$. In more detail, we introduce a variable $\xi\in (0,1]$ which  parametrizes all Aloff--Wallach spaces in a continuous way, where $W^7_{k_1,k_2}$ corresponds to $\xi=\frac{k_1}{k_2}$. This allows us to treat different spaces in a continuous manner and hence
relate the Ricci flow behavior of metrics on spaces $W^7_{k_1^n,k_2^n}$ to that of the three-parameter family on $W_{1,1}^k$, provided that $\frac{k^n_1}{k^n_2}$ tends to  $1$.

The proof of Theorem~\ref{THM:main_thm_B13} for $B^{13}$ is straightforward. The space of $\su{5}$-invariant metrics on $B^{13}=\su{5}/\sp{2}\mathsf{S}^1$ only depends on two parameters and the Ricci flow reduces to an ODE system in two variables. Moreover, the subspace of positively curved metrics as well as the subspace of metrics with negatively curved $2$-planes,  determined by P\"uttmann, have a simple description. A careful analysis of the ODE system at a boundary metric then yields the existence of $\sec>0$ metrics which evolve under the Ricci flow to metrics with negatively curved $2$-planes. 

The article is organized  as follows. In Section~\ref{S:prelim} we collect the relevant information about Aloff--Wallach spaces, and describe the aforementioned two-parameter and three-parameter families of metrics on $W_{1, 1}^7$. In Section~\ref{S:3_param_nearby_spaces} we show the existence of metrics with $\sec>0$ in the two-parameter and three-parameter families which evolve under the Ricci flow to metrics with negatively or non-positively curved $2$-planes, respectively; in particular we prove Theorem~\ref{THM:main_thm_B13} for $W_{1,1}^7$. We  illustrate this phenomenon by drawing  their phase portrait. Then we prove Theorem~\ref{THM:main_thm_AW}, and give a computer-assisted proof showing that the conclusion of Theorem~\ref{THM:main_thm_AW} holds for all Aloff--Wallach spaces. Finally, in Section~\ref{SEC:Berger} we prove Theorem~\ref{THM:main_thm_B13} for $B^{13}$. 

\subsection*{Acknowledgements} 
We would like to thank Christoph B\"ohm for his suggestion on how to extend the argument from $W^7_{1,1 }$ to infinitely many spaces. We would also like to thank Anusha Krishnan for helpful discussions. We are greatful to an anonymous referee for invaluable comments and suggestions. This project was initiated while the first author was visiting the University of M\"unster. The first author wishes to thank the University of M\"unster for providing excellent working conditions.

The first author was supported by grants PID2021-124195NB-C31 and PID2021-124195NB-C32 from the Agencia Estatal de Investigación and the Ministerio de Ciencia e Innovación (Spain). The second author was supported  by Deutsche Forschungsgemeinschaft (DFG, German Research Foundation) under Germany's Excellence Strategy EXC 2044-390685587, Mathematics M\"unster: Dynamics-Geometry-Structure, and by  DFG grant ZA976/1-1 within the Priority Program SPP2026 ``Geometry at Infinity''.

\section{Aloff--Wallach spaces}\label{S:prelim}
In this section we give the necessary background on Aloff--Wallach spaces  and describe the metrics which will be used in the proofs of the main theorems.

\subsection{Diagonal Metrics on Aloff--Wallach spaces}\label{SEC:generalities_AW}
In this subsection, we follow the notation of P\"uttmann in \cite{Pue99} to establish the space of metrics on Aloff--Wallach spaces that are of interest to us. Let 
$$\mathsf{T}^{2}=\{\diag(e^{i\theta_{0}}, e^{i\theta_{1}}, e^{i\theta_{2}}) \mid \theta_{0}+\theta_{1}+\theta_{2}=0\}$$ 
be a maximal torus of $\su{3}$. For each vector $\vec k=(k_0,k_1,k_2)\in\mathbb{Z}^3\setminus \{0\}$ with  $0\leq k_1\leq k_2$, $\gcd(k_1,k_2)=1$, and $k_0+k_1+k_2=0$, there exists an embedding of $\mathsf{S}^1$ in the maximal torus $\mathsf{T}^2<\su{3}$,  denoted by $\mathsf{S}^1_{\vec k}$,   given by $z\mapsto \diag(z^{k_0},z^{k_1},z^{k_2})$. The homogeneous space $\su{3}/\mathsf{S}^1_{\vec k}$ is called an Aloff--Wallach space and is denoted by $W_{k_1,k_2}^7$. Since $W_{0,1}^7$ does not admit positively curved homogeneous metrics, throughout the paper we will further assume that $0<k_1\leq k_2$.

Before describing the desired set of metrics on $W_{k_1,k_2}^7$, let us briefly recall some facts about invariant metrics on homogeneous spaces, see \cite[Chapter~7]{Be87} for details.  Let $G$ be a compact Lie group and $M=G/H$ be a homogeneous space. Fix a bi-invariant metric $\llangle \cdot,\cdot\rrangle$ on $G$, and let $\lieG=\lieH\oplus\lieP$ be a $\llangle \cdot,\cdot\rrangle$-orthogonal decomposition of the Lie algebra $\lieG$ of $G$, where $\lieH$ denotes the Lie algebra of $H$. Then $\lieP$ is isomorphic to the tangent space of $M$ at the origin $o=eH$. It is well-known that there is a one-to-one correspondence between $\Ad(H)$-invariant inner products on $\lieP$ and $G$-invariant metrics on $M$. Moreover, the Ricci tensor of such a $G$-invariant metric is determined by its value at the point  $o$, which gives an $\Ad(H)$-invariant symmetric bilinear form on $\lieP$.

Returning to the case of Aloff--Wallach spaces, let $t,s_0,s_1,s_2$ be positive real numbers, and denote  $\vec s=(s_0,s_1,s_2)$. Define the vector $\hat k=(\hat k_0,\hat k_1,\hat k_2)$ as $\hat k=\frac{\vec k}{|\vec k|}$, and let $\Gamma = k_1^2 + k_2^2 + k_1k_2$. Note that $|\vec k|=\sqrt{2\Gamma}$.
The tangent space of $W_{k_1,k_2}^7$ at the origin  $o$ can be described as the image of the sum of one copy $\bar\varepsilon$ of $\R$ and three copies $\C_i$ of $\C$ via the following maps:
$$
\begin{array}{cccc}
\Psi_{t,\vec s}: &\bar\varepsilon\oplus\C_0\oplus\C_1\oplus\C_2 &\to &  \g{su}_3 \\
&(x,z_0,z_1,z_2) &\mapsto& \begin{pmatrix}
\frac{2i(\hat k_1-\hat k_2)}{\sqrt{6t}}x & -\frac{z_2}{\sqrt{s_2}} &\frac{\bar z_1}{\sqrt{s_1}}\\
\frac{\bar z_2}{\sqrt{s_2}} & \frac{2i(\hat k_2-\hat k_0)}{\sqrt{6t}}x & -\frac{z_0}{\sqrt{s_0}}\\
-\frac{z_1}{\sqrt{s_1}} & \frac{\bar z_0}{\sqrt{s_0}} & \frac{2i(\hat k_0-\hat k_1)}{\sqrt{6t}}x\\
\end{pmatrix}.
\end{array}
$$

For each $(t,\vec s)$, P\"uttmann defines an $\su{3}$-invariant metric $\langle\cdot,\cdot\rangle_{t,\vec s}$ on $W_{k_1,k_2}^7$ as follows.  Denote by $\langle\cdot,\cdot\rangle$ the standard inner product on $\bar\varepsilon\oplus\C_0\oplus\C_1\oplus\C_2$. Then the metric $\langle\cdot,\cdot\rangle_{t,\vec s}$ is defined by the rule
$$\Psi_{t,\vec s}^\ast\left( \langle\cdot,\cdot\rangle_{t,\vec s}\right)=\langle\cdot,\cdot\rangle.$$
It will be convenient to define $\langle\cdot,\cdot\rangle_{t,\vec s}$ from a different point of view. To do so, note that the image of each of the summands $\bar\varepsilon,\C_0,\C_1,\C_2$ by $\Psi_{t,\vec s}$ does not depend on $t,\vec s$. Thus we may define
$$
\lieP_0=\Psi_{t,\vec s}(\bar\varepsilon),\qquad \lieP_1=\Psi_{t,\vec s}(\C_0),\qquad \lieP_2=\Psi_{t,\vec s}(\C_1),\qquad \lieP_3=\Psi_{t,\vec s}(\C_2).
$$
Observe that
$$
\lieP_0=\spann \{E\},\quad \lieP_1=\spann\{E_{23}, F_{23}\},\quad \lieP_2=\spann\{E_{13}, F_{13}\},\quad  \lieP_3=\spann\{E_{12}, F_{12}\}.
$$ 
Here $E=\diag((k_1-k_2)i,(2k_2+k_1)i,-(2k_1+k_2)i)$ and $E_{jk}$, $F_{jk}$ are defined to be the $3\times 3$ matrices with entries given by
$$
E_{jk}=\begin{cases}
1 & \text{in the position } (j,k),\\
-1 & \text{in the position } (k,j),\\
0 & \text{otherwise},\\
\end{cases}\qquad
F_{jk}=\begin{cases}
i &\text{in the position } (j,k),\\
i & \text{in the position } (k,j),\\
0 & \text{otherwise}.\\
\end{cases}
$$
Note that $\lieP_0$ and $E$ depend on $k_1,k_2$.

Let $\llangle\cdot,\cdot\rrangle$ be the bi-invariant metric on $\su{3}$ given by $\llangle X,Y\rrangle =-\frac{1}{2}\text{Re}\tr (XY)=-\frac{1}{2}\tr (XY)$, for $X,Y\in\g{su}_3$. Define $\lieH\subset\g{su}_{3}$ as the Lie algebra of $\mathsf{S}^1_{\vec k}$, i.e. $\lieH$ is spanned by $\diag(-(k_1+k_2)i,k_1i,k_2i)$. Then the orthogonal complement $\lieH^\perp$ of $\lieH$ with respect to $\llangle\cdot,\cdot\rrangle$ equals $\lieP_0\oplus\lieP_1\oplus\lieP_2\oplus\lieP_3$. Moreover,
$$
\lieP=\lieP_0\oplus\lieP_1\oplus\lieP_2\oplus\lieP_3
$$
is a $\llangle\cdot,\cdot\rrangle$-orthogonal $\Ad(\mathsf{S}^1_{\vec k})$-invariant irreducible decomposition of $\lieH^\perp$. As explained in \cite[p.~648]{Pue99}, the metrics $\langle\cdot,\cdot\rangle_{t,\vec s}$ can be written in the following diagonal form:
\begin{equation}\label{Eq:T2-Metrics}
\langle \cdot, \cdot\rangle_{t,\vec s}=t  \llangle\cdot,\cdot\rrangle|_{\lieP_0}+ s_0\llangle\cdot,\cdot\rrangle|_{\lieP_1}+ s_1\llangle\cdot,\cdot\rrangle|_{\lieP_2}+ s_2\llangle\cdot,\cdot\rrangle|_{\lieP_3}.
\end{equation}

Note that the matrices $E,E_{23}, F_{23},E_{13}, F_{13},E_{12}, F_{12}$ defined above form a basis for $\lieP$, which is adapted to the decomposition $\lieP_0\oplus\lieP_1\oplus\lieP_2\oplus\lieP_3$ by construction. Moreover, the basis is orthogonal with respect to $\llangle\cdot,\cdot\rrangle$. In fact, all matrices except $E$, whose norm is $\sqrt{3\Gamma}$,  are unit vectors. Let $\tilde E=\frac{E}{\sqrt{3\Gamma}}$. Thus we arrive to the following $\llangle\cdot,\cdot\rrangle$-orthonormal basis for $\lieP$ adapted to $\lieP_0\oplus\lieP_1\oplus\lieP_2\oplus\lieP_3$:
\begin{equation}\label{EQ:orthon_basis}
\mathcal{B}=\{\tilde E,E_{23}, F_{23},E_{13}, F_{13},E_{12}, F_{12}\}.
\end{equation}

\begin{remark}\label{REM:T2-Metrics}
By \cite[Corollary~4.3]{Pue99}, the cone of $\su{3}$-invariant and right $\mathsf{T}^2$-invariant  metrics on $W_{k_1,k_2}^7$ consists precisely of the metrics $\langle\cdot,\cdot\rangle_{t,\vec s}$. In the case of $W_{1,1}^7$, the subcone of metrics which are additionally right $\mathsf{U}_2$-invariant consists of those with $t=s_0$ and $s_1=s_2$, i.e. the metrics $\langle\cdot,\cdot\rangle_{t,t,s,s}$. Here the right $\mathsf{U}_2$-action on $W_{1,1}^7$ descends from the right $\mathsf{U}_2$-action on $\su{3}$ given by the inclusion $A\mapsto \diag(\det A^{-1},A)$.
\end{remark}

\subsection{Eigenvalues of  the Ricci tensor }\label{SEC:diagonalizing}

Here we discuss the Ricci tensor of the metrics $\langle\cdot,\cdot\rangle_{t,\vec s}$.
If $k_1\neq k_2$, then the modules $\lieP_i$ are pairwise inequivalent as $\Ad(\mathsf{S}^1_{\vec k})$-invariant representations. This has two important consequences. First, all $\su{3}$-invariant metrics on $W^{7}_{k_1,k_2}$ are of the form $\langle\cdot,\cdot\rangle_{t,\vec s}$. Second, the Ricci tensor is diagonal, meaning that it is of the form 
\begin{equation}\label{Eq:T2-Ricci}
\Ric=tr_0 \llangle\cdot,\cdot\rrangle|_{ \lieP_0}+ s_{0}r_1\llangle\cdot,\cdot\rrangle|_{\lieP_1}+ s_{1}r_2\llangle\cdot,\cdot\rrangle|_{\lieP_2}+ s_{2}r_3\llangle\cdot,\cdot\rrangle|_{\lieP_3}
\end{equation}
for certain $r_i\in\R$ which depend on $t,\vec s$, $k_1$, $k_2$.

If $k_{1}=k_{2}=1$, then  $\lieP_2$ and $\lieP_3$ are equivalent as $\Ad(\mathsf{S}^1_{1, 1})$-invariant representations, see \cite[Lemma~4.2]{Pue99}. Thus, it is a priori not clear whether the Ricci tensor of the metrics $\langle\cdot,\cdot\rangle_{t,\vec s}$ is of the form given in Equation~\eqref{Eq:T2-Ricci}. 
In order to show that it has  in fact a diagonal form, recall from Remark~\ref{REM:T2-Metrics} that the metrics $\langle\cdot,\cdot\rangle_{t,\vec s}$ are precisely the $\su{3}$-invariant and right $\mathsf{T}^2$-invariant  metrics on $W_{1,1}^7$, and again from \cite[Lemma~4.2]{Pue99} we see that the modules $\lieP_i$, $i=0, \ldots, 3$, are inequivalent as $\Ad(\mathsf T^2)$-invariant representations. This implies by Schur's Lemma that the Ricci tensor of the metrics $\langle\cdot,\cdot\rangle_{t,\vec s}$ has  diagonal form as in Equation~\eqref{Eq:T2-Ricci} as well.

Next we compute the eigenvalues of the Ricci tensor given in Equation~\eqref{Eq:T2-Ricci} (cf. \cite[Lemma~2]{W82}).  Let us recall the procedure to compute the eigenvalues of diagonal Ricci tensors in the general case. Let $G/H$ be a compact homogeneous space and $\llangle\cdot,\cdot\rrangle$ a bi-invariant metric on $G$, and denote by $\lieH^\perp$ the orthogonal complement of $\lieH$. Then, for each $G$-invariant metric $g$ on $G/H$ there is an irreducible $\Ad(H)$-invariant decomposition $\oplus_{i=0}^m\lieP_i$ of $\lieH^\perp$ such that $g$ is of the form
$$
g=x_0\llangle\cdot,\cdot\rrangle|_{\lieP_0}+ \dots + x_m\llangle\cdot,\cdot\rrangle|_{\lieP_m},
$$
for some positive $x_i\in\R$. Suppose that the Ricci tensor of $g$ is diagonal with respect to the given decomposition $\oplus_{i=0}^m\lieP_i$, i.e. it is of the form
$$
\Ric=x_{0}r_0\llangle\cdot,\cdot\rrangle|_{\lieP_0}+ \dots + x_{m}r_m\llangle\cdot,\cdot\rrangle|_{\lieP_m},
$$
for certain $r_i\in\R$. In this case, the eigenvalues $r_i$ can be computed using the following formula (see e.g. \cite[Corollary~7.38]{Be87} or \cite[Section~1]{WZ86}):
\begin{equation}\label{eqn:Ric_eig}
 r_i = \frac{b_i}{2x_i} - \frac{1}{2d_i}\sum_{j, k=0 }^m [ijk] \frac{x_j}{x_i x_k} + \frac{1}{4d_i} \sum_{j, k =0}^m [ijk] \frac{x_i}{x_j x_k}.
\end{equation}
Here $d_i=\dim \lieP_i$ and $b_i$ is determined by $-B|_{\lieP_i}=b_i \llangle\cdot, \cdot\rrangle |_{\lieP_i}$, where $B$ denotes the Killing form of $G$. The bracket constants $[ijk]$ are defined by 
$$
[ijk] = \displaystyle\sum_{\substack{e_\alpha \in \lieP_i\\ e_\beta \in \lieP_j\\ e_\gamma \in \lieP_k}} \llangle\left[e_\alpha, 
e_\beta\right], e_\gamma\rrangle^2,
$$
where $\{e_i\}$ is a $\llangle\cdot,\cdot\rrangle$-orthonormal basis of $\lieH^\perp$ adapted to $\oplus_{i=0}^m\lieP_i$. Notice that $[ijk]$ is symmetric in all indices.

In the case of the metrics $\langle \cdot, \cdot\rangle_{t,\vec s}$ from Equation~\eqref{Eq:T2-Metrics}, we have $m=3$, $x_0=t$, $x_1=s_0$, $x_2=s_1$ and $x_3=s_2$. The Killing form of $\g{su}_n$ is given by $B(X, Y) = 2n \operatorname{tr}(XY)$. Hence for $\g{su}_3$, we have $B(X, Y) = 6\operatorname{tr}(XY) = -12\llangle X, Y\rrangle$. Therefore, $b_i = 12$ for each $i \in\{0, 1, 2, 3\}$. It remains to compute the bracket constants $[ijk]$. In order to do so, we  take the basis from \eqref{EQ:orthon_basis}. For convenience we collect the Lie brackets of the elements of the basis $\mathcal{B}$ in Table~\ref{TA:Brackets}. Notice that Table~\ref{TA:Brackets} represents $[e_i, e_j]_\lieP$, where $e_i$ is the vector in row $i$, and $e_j$ is the vector in column $j$. The notation $[e_i,e_j]_\lieP$ stands for the orthogonal projection of $[e_i,e_j]$ onto $\lieP$, which provides enough information for our purposes. 

\bgroup
\def\arraystretch{1.7}
\begin{table}[!htbp]
$$
\begin{array}{c|ccccccc}
 & \tilde E & E_{23}& F_{23}&E_{13}& F_{13}&E_{12}& F_{12} \\ 
\hline
 
 \tilde E & 0 & \frac{3(k_1+k_2)}{\sqrt{3\Gamma}}F_{23} & -\frac{3(k_1+k_2)}{\sqrt{3\Gamma}}E_{23} & \frac{3k_1}{\sqrt{3\Gamma}}F_{13} & -\frac{3k_1}{\sqrt{3\Gamma}}E_{13} & -\frac{3k_2}{\sqrt{3\Gamma}}F_{12} & \frac{3k_2}{\sqrt{3\Gamma}}E_{12}\\ 
 E_{23} &&0& \frac{3(k_1+k_2)}{\sqrt{3\Gamma}}\tilde E & E_{12} & F_{12} & -E_{13} & -F_{13}\\ 
 F_{23} &&&0& -F_{12} & E_{12} & -F_{13} & E_{13} \\ 
 E_{13} &&&&0& \frac{3k_1}{\sqrt{3\Gamma}}\tilde E & E_{23} & -F_{23} \\  
 F_{13} &&&&&0& F_{23} & E_{23}\\ 
 E_{12} &&&&&&0&  - \frac{3k_2}{\sqrt{3\Gamma}}\tilde E \\  
 F_{12} &&&&&&&0
\end{array}
$$
 \caption{\label{TA:Brackets}
$\lieP$-projection of Lie brackets of the elements of the basis $\mathcal{B}$.}
\end{table}
\egroup

Using Table~\ref{TA:Brackets} we get:
\begin{proposition}\label{prop:bracketsBK}
The bracket constants for $W_{k_1,k_2}^7$ are as follows:
 \begin{align*}
 [iij] &= 0,\quad  i, j \in \{1, 2, 3\},  & [ij0] &= 0, \quad i, j \in \{1, 2, 3\},\; i \neq j,\\
 [123] &= 4, & [00i] &= 0,\quad  i\in \{1, 2, 3\},\\
 [220] &= \frac{6k_1^2}{\Gamma}, & [110] &= \frac{6(k_1+k_2)^2}{\Gamma},\\
 [330] &= \frac{6k_2^2}{\Gamma} .&& \\
 \end{align*}
\end{proposition}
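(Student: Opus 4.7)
The plan is to compute each bracket constant directly from its defining sum, using the orthonormal basis $\mathcal{B}$ from \eqref{EQ:orthon_basis} together with the bracket data collected in Table~\ref{TA:Brackets}. Since $\mathcal{B}$ is adapted to the decomposition $\lieP_0\oplus\lieP_1\oplus\lieP_2\oplus\lieP_3$, each $[ijk]$ reduces to a sum of squared inner products $\llangle[e_\alpha, e_\beta], e_\gamma\rrangle^2$, where $e_\alpha, e_\beta, e_\gamma$ range over the $\mathcal{B}$-vectors spanning $\lieP_i$, $\lieP_j$, $\lieP_k$ respectively. The relevant inner products are precisely the coefficients of $e_\gamma$ in the $\mathcal{B}$-expansion of $[e_\alpha, e_\beta]_\lieP$ displayed in the table, so the entire proposition amounts to reading off and squaring these coefficients.

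The vanishing statements are then immediate. Since $\lieP_0=\spann\{\tilde E\}$ and $[\tilde E, \tilde E]=0$, the constants $[00i]$ vanish for every $i$. For $[iij]$ with $i\in\{1,2,3\}$, the only bracket of two distinct basis vectors inside $\lieP_i$ is one of $[E_{23}, F_{23}]$, $[E_{13}, F_{13}]$, $[E_{12}, F_{12}]$, and each of these lies entirely in $\lieP_0$ by Table~\ref{TA:Brackets}; hence the projection onto $\lieP_j$ vanishes for $j\in\{1,2,3\}$. Similarly, for $[ij0]$ with $i\neq j$ in $\{1,2,3\}$, the mixed brackets $[\lieP_i, \lieP_j]$ recorded in the table lie in the third module $\lieP_k$, with $\{i,j,k\}=\{1,2,3\}$, so their $\lieP_0$-component is zero.

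For the three nonzero $[ii0]$ constants I would read off the relevant table entries and square them. The constant $[110]$ receives contributions only from the ordered pairs $(E_{23}, F_{23})$ and $(F_{23}, E_{23})$ paired with $\tilde E$; each contributes $(3(k_1+k_2)/\sqrt{3\Gamma})^2 = 3(k_1+k_2)^2/\Gamma$, summing to $[110]=6(k_1+k_2)^2/\Gamma$. Entirely analogous computations using the table entries $[E_{13}, F_{13}]=(3k_1/\sqrt{3\Gamma})\tilde E$ and $[E_{12}, F_{12}]=-(3k_2/\sqrt{3\Gamma})\tilde E$ yield the stated values of $[220]$ and $[330]$.

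Finally, for $[123]$ I would enumerate the eight ordered triples $(e_\alpha, e_\beta, e_\gamma)$ with $e_\alpha\in\{E_{23}, F_{23}\}$, $e_\beta\in\{E_{13}, F_{13}\}$, $e_\gamma\in\{E_{12}, F_{12}\}$. Table~\ref{TA:Brackets} shows that each of the four brackets $[E_{23}, E_{13}]$, $[E_{23}, F_{13}]$, $[F_{23}, E_{13}]$, $[F_{23}, F_{13}]$ equals $\pm E_{12}$ or $\pm F_{12}$, with unit coefficient on one $\mathcal{B}$-vector and zero on the other; hence exactly four of the eight triples contribute $1$ and the rest contribute $0$, giving $[123]=4$. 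There is no real obstacle in any of this—the proposition is a bookkeeping exercise once Table~\ref{TA:Brackets} is in hand, and the only care required is tracking the factor of $2$ coming from ordered versus unordered pairs.
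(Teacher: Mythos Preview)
Your proposal is correct and follows exactly the approach indicated in the paper, which simply says ``Using Table~\ref{TA:Brackets} we get'' the proposition without further detail. You have filled in precisely the bookkeeping that the paper leaves implicit, and your computations of $[110]$, $[220]$, $[330]$, $[123]$ and the vanishing cases are all accurate.
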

Now we have all the ingredients to compute the eigenvalues of the Ricci tensor.

\begin{proposition}\label{prop:RicEigAW}
The Ricci eigenvalues of the metric $\langle \cdot, \cdot\rangle_{t,\vec s}$ on $W_{k_1,k_2}^7$ are given by
\begin{align*}
 r_0 &= \frac{3t}{2\Gamma} \left( \frac{(k_{1}+k_{2})^2}{s_0^2} + \frac{k_{1}^2}{s_1^2} + \frac{k_{2}^2}{s_2^2}\right),\\
 r_1 &= \frac{6}{s_0} -\frac{3(k_{1}+k_{2})^2t}{2\Gamma s_0^2} + \left( \frac{s_0}{s_1s_2} - \frac{s_1}{s_0s_2} - \frac{s_2}{s_0s_1}\right),\\
 r_2 &= \frac{6}{s_1} -\frac{3k_{1}^2t}{2\Gamma s_1^2} + \left( \frac{s_1}{s_0s_2} - \frac{s_0}{s_1s_2} - \frac{s_2}{s_0s_1}\right), \\
 r_3 &= \frac{6}{s_2} -\frac{3k_{2}^2 t}{2\Gamma s_2^2} + \left( \frac{s_2}{s_0s_1} - \frac{s_0}{s_1s_2} - \frac{s_1}{s_0s_2}\right).
\end{align*}
\end{proposition}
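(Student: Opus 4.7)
The plan is to apply formula~\eqref{eqn:Ric_eig} directly, with the data $m=3$, $(x_0,x_1,x_2,x_3)=(t,s_0,s_1,s_2)$, $d_0=1$, $d_1=d_2=d_3=2$, $b_i=12$, and the bracket constants from Proposition~\ref{prop:bracketsBK}. The nonzero brackets (up to the full $S_3$ symmetry of $[ijk]$) are exactly $[011]=\tfrac{6(k_1+k_2)^2}{\Gamma}$, $[022]=\tfrac{6k_1^2}{\Gamma}$, $[033]=\tfrac{6k_2^2}{\Gamma}$, and $[123]=4$. So each $r_i$ reduces to a short finite sum.

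For $r_0$ (where $d_0=1$), only the brackets $[011],[022],[033]$ appear, and they appear diagonally ($j=k$) in the sums. One therefore gets
\[
r_0 \;=\; \frac{6}{t} \;-\; \frac{1}{2t}\bigl([011]+[022]+[033]\bigr) \;+\; \frac{t}{4}\left(\frac{[011]}{s_0^2}+\frac{[022]}{s_1^2}+\frac{[033]}{s_2^2}\right).
\]
The second term simplifies using the key algebraic identity
\[
(k_1+k_2)^2+k_1^2+k_2^2 \;=\; 2(k_1^2+k_2^2+k_1k_2) \;=\; 2\Gamma,
\]
which causes it to cancel exactly the first term $6/t$. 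Substituting the values of $[0ii]$ in the third term then yields the stated formula for $r_0$.

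For $r_1$ (so $i=1$, $d_1=2$), the only brackets appearing are $[110]$ (contributing through $(j,k)\in\{(1,0),(0,1)\}$) and $[123]$ (through $(j,k)\in\{(2,3),(3,2)\}$). The $[110]$-contribution to the second sum produces two terms, one of which equals $\tfrac{3(k_1+k_2)^2}{2\Gamma t}$ and is killed by the analogous contribution from the third sum; what survives is the Killing piece $6/s_0$, the genuine curvature term $-\tfrac{3(k_1+k_2)^2 t}{2\Gamma s_0^2}$ coming from the nondiagonal $(j,k)=(0,1)$ entry of the second sum, and the standard mixed term $\tfrac{s_0}{s_1s_2}-\tfrac{s_1}{s_0s_2}-\tfrac{s_2}{s_0s_1}$ coming from $[123]$ (the sign pattern being the usual one in \eqref{eqn:Ric_eig}: one $+$ from the third sum and two $-$'s from the second sum). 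This is precisely the stated $r_1$. The computations of $r_2$ and $r_3$ are identical after cyclically permuting the roles of $(s_0,s_1,s_2)$ and $((k_1+k_2),k_1,k_2)$, using $[220]=\tfrac{6k_1^2}{\Gamma}$ and $[330]=\tfrac{6k_2^2}{\Gamma}$ in place of $[110]$.

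The proof is essentially bookkeeping; the main thing to watch is the $S_3$-symmetry of $[ijk]$, which forces each off-diagonal pair $(j,k)$ to be counted twice in the double sums of \eqref{eqn:Ric_eig}, and the only genuinely non-mechanical observation is the identity $(k_1+k_2)^2+k_1^2+k_2^2=2\Gamma$, needed to see the cancellation of the $6/t$ Killing term in $r_0$.
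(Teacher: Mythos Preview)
Your proof is correct and follows exactly the approach indicated in the paper: plug the data $b_i=12$, the dimensions $d_i$, and the bracket constants of Proposition~\ref{prop:bracketsBK} into formula~\eqref{eqn:Ric_eig}. The paper does not spell out the bookkeeping, so your write-up is in fact more detailed; the one nontrivial step you single out, namely $(k_1+k_2)^2+k_1^2+k_2^2=2\Gamma$ forcing the $6/t$ cancellation in $r_0$, is precisely the point that makes the computation come out cleanly.
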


\subsection{Cone of positively curved metrics}\label{sec:parameters}
One of the main results in P\"uttmann's article is to determine the two following sets (see Remark~\ref{REM:T2-Metrics}):
\begin{itemize}
    \item the set of $\sec>0$ metrics among $\su{3}$-invariant right $\u{2}$-invariant metrics on $W_{1,1}^7$,
    \item the set of $\sec>0$ metrics among $\su{3}$-invariant right $\mathsf{T}^{2}$-invariant metrics on $W_{k_1,k_2}^7$ for any pair $k_1,k_2$.
\end{itemize}
The first set is very easy to describe. In fact, \cite[Lemma~6.1]{Pue99} gives explicitly the minimum of the sectional curvatures of $\langle\cdot,\cdot\rangle_{t,t,s,s}$, which leads to the following theorem. 

\begin{theorem}[P\"uttmann]\label{THM:cone_U2_invariant}
The metric $\langle\cdot,\cdot\rangle_{t,t,s,s}$ on $W^7_{1, 1}$ has $\sec>0$ if and only if $t<s$. Moreover, the metric $\langle\cdot,\cdot\rangle_{t,t,s,s}$ has negatively curved $2$-planes if and only if $t>s$.
\end{theorem}

Much more background is needed to give an account of the second set. Let us introduce the following notation:
$$\Omega(k_1,k_2)=\{(t,\vec s)\text{ such that } \langle\cdot,\cdot\rangle_{t,\vec s} \text{ on } W_{k_1,k_2}^7 \text{ is positively curved}\}.$$
In order to describe this set, we need to introduce a certain space $\Omega_{\hat\lambda}^\prime$ and a real valued function $t_A$ on $\Omega_{\hat\lambda}^\prime$.

For each $k_1,k_2$ and each $(t,\vec s)$, P\"uttmann defines a real function $\hat\lambda=\hat\lambda(t,\vec s,k_1,k_2)$   as the minimum of sectional curvatures of certain $2$-planes with respect to the metric $\langle\cdot,\cdot\rangle_{t,\vec s}$ on $W_{k_1,k_2}^7$, see \cite[Definition~5.1]{Pue99}. Since we only need some estimates and not the precise definition of $\hat\lambda$, we omit its technical definition. The properties of $\hat\lambda$ that we will use in this work follow from the following result of P\"uttmann:

\begin{lemma}\cite[Lemma~5.16]{Pue99}\label{L:Omega_hat}
Let $\Omega_{\hat\lambda}(k_1,k_2):=\{(t, \vec s)\mid \hat\lambda(t,\vec s,k_1,k_2)>0\}$. Then  the following hold:
\begin{enumerate}
    \item $\Omega(k_1,k_2)\subset \Omega_{\hat\lambda}(k_1,k_2)$, i.e. if $\langle\cdot,\cdot\rangle_{t,\vec s}$ on $W_{k_1,k_2}^7$ has $\sec >0$, then $\hat\lambda(t,\vec s,k_1,k_2)>0$.
    \item The condition $\hat\lambda(t,\vec s,k_1,k_2)>0$ does not depend on $t$, and hence $\Omega_{\hat\lambda}(k_1,k_2)=\mathbb{R}_+\times \Omega_{\hat\lambda}^\prime (k_1,k_2)$ for some space $\Omega_{\hat\lambda}^\prime (k_1,k_2)$.
\end{enumerate}
\end{lemma}

Part~(2) of Lemma~\ref{L:Omega_hat} holds when taking $\Omega_{\hat\lambda}^\prime(k_1,k_2)$ to be the space:
\begin{equation}\label{EQ:omega_k1k2}
    \Omega_{\hat\lambda}^\prime (k_1,k_2)= \{\vec s\text{ } \mid \text{ }\hat\lambda(t,\vec s,k_1,k_2)>0\, \,\text{for some (hence all)}\,\, t>0\}.
\end{equation}
The reader can find an explicit characterization of the set $\Omega_{\hat\lambda}^\prime (k_1,k_2)$ in Equation~\eqref{EQ:characterization_Omega_lambda} below.

The positively curved metrics on $W^{7}_{k_{1},k_{2}}$ found by Aloff and Wallach in \cite{AW75} provide an explicit subset of the set $\Omega_{\hat\lambda}^\prime (k_1,k_2)$ given in Equation~\eqref{EQ:omega_k1k2}. According to \cite{AW75}, metrics on $W^{7}_{k_{1},k_{2}}$ of the form $\langle\cdot,\cdot\rangle_{t,t,s,s}$ with $t<s$ are positively curved. Hence, as observed in \cite[p.~662]{Pue99}, it follows from Lemma~\ref{L:Omega_hat} that
\begin{equation}\label{EQ:AW}
\text{if }s_1=s_2,\text{ then }(s_0,s_1,s_2)\in\Omega_{\hat\lambda}^\prime (k_1,k_2)\text{ for all } s_0\in (0,s_1) \text{ and all }k_1,k_2.
\end{equation}
Moreover, as we shall justify, it follows from the work of P\"uttmann that $\Omega_{\hat\lambda}^\prime (k_1,k_2)$ is an open subset in $\R_+^3$. Thus triples $(s_0,s_1,s_2)$ sufficiently close to those as in Equation~\eqref{EQ:AW} also belong to $\Omega_{\hat\lambda}^\prime (k_1,k_2)$. We summarize the previous discussion in the following lemma.

\begin{lemma}\label{LEM:omega_open}
For all $k_1,k_2$, the set $\Omega_{\hat\lambda}^\prime (k_1,k_2)$ is open in $\R_+^3$ and contains the subset $\{(t,s,s)\mid t<s \}$.
\end{lemma}

We postpone the justification of the openness of $\Omega_{\hat\lambda}^\prime (k_1,k_2)$ in Lemma~\ref{LEM:omega_open} to the end of the present subsection, right below Theorem~\ref{THM:Omega}, because it uses some notions which are yet to be introduced.

Now we recall the definition of the map $t_A=t_A(\vec s,k_1,k_2)$. We need the following auxiliary functions:
\begin{align*}
\tilde a_j(\vec s) &=\frac{4}{s_j},\\
\tilde b_j(\vec s) &=-\frac{2s_1s_2 + 2s_0s_2 + 2s_0s_1-s_0^2-s_1^2-s_2^2}{s_0s_1s_2} +\frac{s_{j-1}-s_j+s_{j+1}}{s_{j-1}s_{j+1}},\\
\sigma(\vec s) &= 2s_1s_2 + 2s_0s_2 +2s_0s_1 - s_0^2 - s_1^2 - s_2^2.
\end{align*}
We define the following open subsets of $\R_{+}^{3}$:
\begin{align*}
\Omega_{\sigma}^\prime &= \{\vec s\text{ } \vert \text{ }\sigma>0\},\\
D_{\sigma}&= \Omega_{\sigma}^\prime \setminus \{(s, s, s)\mid s>0\}. 
\end{align*}
Note that by \cite[Lemma~5.17]{Pue99}, we have 
\begin{equation}\label{EQ:inclusion_Omegas}
\Omega'_{\hat\lambda}(k_1,k_2)\subset \Omega'_\sigma\quad\text{ for all } k_1,k_2.
\end{equation}
Further,  we have  the matrix $\tilde A(\vec s)$ defined as (see \cite[p.~662]{Pue99})
$$
\tilde A(\vec s)=\begin{pmatrix}
\tilde a_0 & \tilde b_2  & \tilde b_1 \\
\tilde b_2 & \tilde a_1 & \tilde b_0 \\
\tilde b_1 & \tilde b_0 & \tilde a_2 \\
\end{pmatrix}.
$$
According to \cite[Lemma~5.19]{Pue99}, the matrix $\tilde A(\vec s)$ is invertible for each $s\in D_\sigma$. 

For each vector $\vec s$ and each pair $k_1,k_2$, define the vector 
\begin{equation}\label{EQ:vector_v}
v(\vec s,k_1,k_2):=\left(\frac{\hat{k}_{0}}{s_{0}}, \frac{\hat{k}_{1}}{s_{1}}, \frac{\hat{k}_{2}}{s_{2}}\right)^{t}.
\end{equation}
Then $t_A(\vec s,k_1,k_2)$ with $\vec s\in\Omega'_\sigma$  is defined as
\begin{equation}\label{EQ:definition_tA}
t_A(\vec s,k_1,k_2)=\begin{cases}
			0, & \text{if } s_0=s_1=s_2,\\
            \frac{2}{9}\langle v,\tilde A^{-1}v \rangle^{-1}, & \text{otherwise.}
		 \end{cases}
\end{equation}
Because the entries of $\tilde A$ are rational functions, it follows that, for any fixed pair $k_1,k_2$, the map $t_A(\vec s,k_1,k_2)$ is smooth for $\vec s\in D_\sigma$.

We are ready to give the description of $\Omega(k_1,k_2)$, cf. \cite[Observation~5.23]{Pue99}:
\begin{theorem}[P\"uttmann]\label{THM:Omega}
The cone $\Omega(k_1,k_2)$ consisting of metrics $\langle\cdot,\cdot\rangle_{t, \vec s}$ on $W^{7}_{k_1,k_2}$ of $\sec>0$ can be characterized as 
$$
\Omega(k_1,k_2)=\{(t,\vec s)\in\R_+\times\Omega_{\hat\lambda}^\prime(k_1,k_2) \\\ \vert\\\ t<t_A(\vec s,k_1,k_2)\}.
$$
\end{theorem}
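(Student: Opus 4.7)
The theorem is a reformulation of the core curvature computation carried out by P\"uttmann, and my plan is to reduce the condition that $\langle\cdot,\cdot\rangle_{t,\vec s}$ has positive sectional curvature to the two separate conditions $\vec s \in \Omega'_{\hat\lambda}(k_1,k_2)$ and $t < t_A(\vec s,k_1,k_2)$ by exploiting the adapted orthogonal decomposition $\lieP = \lieP_0 \oplus \lieP_1 \oplus \lieP_2 \oplus \lieP_3$.

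First I would compute the sectional curvature $K_{t,\vec s}(X,Y)$ of an arbitrary $2$-plane at the origin using the standard homogeneous-space curvature formula, plugging in the bracket constants recorded in Table~\ref{TA:Brackets}. The $\Ad(\mathsf{S}^1_{\vec k})$-symmetry together with the residual diagonal torus action on the two-dimensional summands $\lieP_i$ ($i\geq 1$) reduces every $2$-plane to a representative parametrized by a small number of real angles. This classification naturally splits the planes into those lying entirely inside $\lieP_1 \oplus \lieP_2 \oplus \lieP_3$ and those having a nontrivial component along $\tilde E$.

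For the first class (which contains the distinguished planes used to define $\hat\lambda$), a direct calculation shows that although the curvature may depend on $t$, its sign is governed by a $t$-independent function of $\vec s$, thereby yielding the characterization $\vec s \in \Omega'_{\hat\lambda}(k_1,k_2)$. Once this is imposed, the curvature of planes in the second class takes the schematic form $K(X,Y) = N(\vec s, X, Y) - t\,Q(\vec s, k_1, k_2, X, Y)$, where $N$ is positive and $Q$ is a positive quadratic form fed by the $[\tilde E,\cdot]$ brackets in Table~\ref{TA:Brackets}. The requirement $K>0$ on all such planes then becomes a Rayleigh-type problem: minimizing $N/Q$ produces a sharp upper bound $t < t^\ast(\vec s, k_1, k_2)$, and a Lagrange-multiplier computation organized around the matrix $\tilde A(\vec s)$ and the vector $v(\vec s, k_1, k_2)$ (whose appearance is forced by the coefficients $3(k_1+k_2)/\sqrt{3\Gamma}$, $3k_1/\sqrt{3\Gamma}$, $3k_2/\sqrt{3\Gamma}$ in the brackets $[\tilde E, E_{jk}]$ and $[\tilde E, F_{jk}]$) identifies $t^\ast$ with $t_A(\vec s, k_1, k_2)$.

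The main obstacle will be proving that the worst-case planes in the $t$-independent analysis are indeed the distinguished ones picked out by the definition of $\hat\lambda$, which requires a careful case analysis based on the vanishing patterns of the bracket constants $[ijk]$ from Proposition~\ref{prop:bracketsBK}. One must also handle the degenerate case $s_0=s_1=s_2$ separately and establish the invertibility of $\tilde A$ on $D_\sigma$ (cf.\ \cite[Lemma~5.19]{Pue99}) before inverting the quadratic form to arrive at the compact expression $t_A = \tfrac{2}{9}\langle v, \tilde A^{-1} v\rangle^{-1}$.
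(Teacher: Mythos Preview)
The paper does not give its own proof of this theorem: it is quoted verbatim from P\"uttmann's work (see the citation to \cite[Observation~5.23]{Pue99} immediately preceding the statement), and is used purely as background input for the Ricci-flow analysis. So there is no in-paper argument to compare your proposal against.

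That said, your outline is a fair high-level summary of P\"uttmann's actual strategy in \cite{Pue99}, with two caveats worth flagging. First, the $t$-independence of the condition $\hat\lambda>0$ is not a ``direct calculation'': it is the content of \cite[Lemma~5.16]{Pue99}, and the planes entering the definition of $\hat\lambda$ are not simply all planes in $\lieP_1\oplus\lieP_2\oplus\lieP_3$ but a carefully chosen subfamily (Definition~5.1 there). Second, the schematic form $K=N(\vec s)-t\,Q(\vec s)$ for planes with a $\tilde E$-component is an oversimplification; the dependence on $t$ is not globally linear, and isolating the critical planes that produce the sharp bound $t_A$ requires the sequence of reductions in \cite[\S5]{Pue99} culminating in the quadratic-form computation with $\tilde A$. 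Your Lagrange-multiplier heuristic is the right intuition for why $\tilde A$ and $v$ appear, but turning it into a proof is precisely the substantial work P\"uttmann carries out.
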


\begin{proof}[Proof of Lemma~\ref{LEM:omega_open}]
The fact that $\Omega_{\hat\lambda}^\prime (k_1,k_2)$ is an open subset of $\R_+^3$ can be deduced from P\"uttmann's work in several ways; we next discuss one of them. We recall the following functions from \cite[p.~652]{Pue99}:
\begin{align*}\label{EQ:cj_dj_xi_j}
    c_j&=c_j(t,\vec s,k_1,k_2)=\frac{3t\hat k_j^2}{2s_j^2},\nonumber\\
    d_j&=d_j(\vec s)=-\frac{\sigma}{4\sigma_3}+\frac{s_{j-1}-s_j+s_{j+1}}{s_{j-1}s_{j+1}},\quad \text{where } \sigma_3=s_0s_1s_2. \\
    \xi_j&=\xi_j(t,\vec s,k_1,k_2)=\sqrt{\frac{3t}{8\sigma_3}}\left( \hat k_j + \hat k_{j-1}\frac{s_j-s_{j+1}}{s_{j-1}}  +\hat k_{j+1}\frac{s_j-s_{j-1}}{s_{j+1}} \right),\nonumber\\
    \end{align*} 
and the following ones from \cite[Lemma~5.18]{Pue99}:
    \begin{align*}
    p_j&=p_j(t,\vec s,k_1,k_2)=c_{j+1}d_{j+1}+c_{j-1}d_{j-1}-(\xi_{j+1}-\xi_{j-1})^2,\\
    q_j&=q_j(t,\vec s,k_1,k_2)=4c_{j+1}d_{j+1}c_{j-1}d_{j-1}-p_j^2.
\end{align*} 
Let $\Omega'_d$ be the set $\{\vec s\text{ } \vert \text{ }d_0,d_1,d_2>0\}$. By \cite[Lemma~5.17]{Pue99}, there is a chain of inclusions
$$
\Omega'_{\hat\lambda}(k_1,k_2)\subset \Omega'_d\subset \Omega'_\sigma\quad\text{ for all } k_1,k_2,
$$
which refines the inclusion given in Equation~\eqref{EQ:inclusion_Omegas}. The set $\Omega'_d$ is open in $\R_+^3$ because it is determined by the positivity of the continuous maps $d_j:\R_+^3\to\R$, which are open conditions. 

By \cite[Lemma~5.18]{Pue99}, the subset $\Omega'_{\hat\lambda}(k_1,k_2)\subset \Omega'_d$ is characterized  as follows:
\begin{equation}\label{EQ:characterization_Omega_lambda}
    \Omega'_{\hat\lambda}(k_1,k_2)=\{\vec s\in\Omega'_d\text{ } \vert \text{ } p_j>0\text{ or } q_j>0\text{ for each } j\in\{0,1,2\} \}.
\end{equation}
Since $c_j,d_j,\xi_j$ are continuous, so are the functions $p_j,q_j$. Thus $\Omega'_{\hat\lambda}(k_1,k_2)$ is open in $\Omega'_\sigma$. It follows that $\Omega'_{\hat\lambda}(k_1,k_2)$ is itself open in $\R_+^3$.
\end{proof}

\subsection{Homogeneous Ricci flow, a special space and special families of metrics}\label{SS:special_spaces_metrics}

Let us firstly review basic properties of the Ricci flow on homogeneous spaces, see e.g. \cite{La13} for details. Given a homogeneous Riemannian manifold $(M, g)$, let  $g(\ell)$ be the solution to  the Ricci flow equation with initial metric $g$. 

Let $G$ be a Lie group acting isometrically and transitively on $(M, g)$. Since the isometry group is preserved under the Ricci flow, there is a presentation of $M$ as a homogeneous space $(M, g(\ell)) = (G/H, g (\ell))$ for all $\ell$ with the same reductive decomposition $\lieG= \lieH\oplus \lieP$.

We denote by $\langle\cdot,\cdot\rangle_{\ell} =g(\ell)(o)$ the  $\Ad(H)$-invariant inner product on $\lieP$, determined by the value of $g(\ell)$ at the origin $o$. The family $\langle\cdot,\cdot\rangle_{\ell}$ is the solution to the following ODE
\begin{equation}\label{Ric_ODE}
\frac{d}{d \ell}\langle\cdot,\cdot\rangle_{\ell}  = -2\Ric (\langle\cdot,\cdot\rangle_{\ell}), 
\end{equation}
where $\Ric(\langle\cdot,\cdot\rangle_{\ell}) = \Ric(g(\ell))(o)$. Hence, the maximal interval of time where $g(\ell)$ exists is of the form $ (L_{-}, L_{+})$ for some $-\infty\leq L_{-}<0<L_{+} \leq \infty$.

Now we restrict the discussion to the spaces $W_{k_1,k_2}^7$. The fact that the Ricci tensor of a metric $\langle\cdot,\cdot\rangle_{t,\vec s}$ is of the form given in Equation~\eqref{Eq:T2-Ricci} permits to express the Ricci flow  in Equation~\eqref{Ric_ODE} as a system of ODEs. More precisely, the Ricci flow of a given initial metric  $\langle\cdot,\cdot\rangle_{t^0,\vec s^0}$, where $(t^0,\vec s^0)=(t^0,s_0^0,s_1^0,s_2^0)$, is a family of metrics $\langle\cdot,\cdot\rangle_{t(\ell),\vec s(\ell)}$ where the parameters $(t(\ell),s_0(\ell),s_1(\ell),s_2(\ell))$ satisfy the system:
\begin{equation}\label{Eq:Ricci_Flow}
\begin{cases}
t'(\ell)=-2r_0 t(\ell),\\
s_j'(\ell)=-2r_{j+1} s_j(\ell),\quad \text{ for all } j\in\{0,1,2\},\\
(t(0),s_0(0),s_1(0),s_2(0))=(t^0,s_0^0,s_1^0,s_2^0).
\end{cases}
\end{equation}
Observe that, although it is not written explicitly, the Ricci tensor eigenvalues $r_i$ depend on $(t(\ell),s_0(\ell),s_1(\ell),s_2(\ell))$ and on $k_1,k_2$. 

To prove the existence of metrics with $\sec>0$ on Aloff--Wallach spaces $W^7_{k_1, k_2}$ evolving under the Ricci flow to metrics with non-positively curved $2$-planes, our strategy consists of understanding the boundary of $\Omega(k_1,k_2)$ and analyzing the Ricci flow starting from a metric $\langle\cdot,\cdot\rangle_{t^0,\vec s^0}$ on the boundary of $\Omega(k_1,k_2)$.  In particular, we may need to compute $t_A(s_0(\ell),s_1(\ell),s_2(\ell),k_1,k_2)$, for small $\ell$, and for that we must  compute $\tilde A^{-1}$ at $(s_0(\ell),s_1(\ell),s_2(\ell))$. We notice that the expression for $\tilde A^{-1}$ becomes unmanageable at points $(s_0,s_1,s_2)$ with pairwise distinct coordinates. Unfortunately, this issue cannot be resolved for the cases where $(k_1,k_2)\neq (1,1)$ since in general the functions $(s_0(\ell),s_1(\ell),s_2(\ell))$ are pairwise distinct for small $\ell\neq 0$ (even if two of them are equal at $\ell=0$). This can be seen by inspecting the Ricci eigenvalues from Proposition~\ref{prop:RicEigAW} and the Ricci flow equations. However, for the case of $W^7_{1,1}$, the Ricci flow of a metric $\langle\cdot,\cdot\rangle_{t^0,\vec s^0}$ with $s_1^0=s_2^0$, as further clarified below, satisfies $s_1(\ell)=s_2(\ell)$ for all times~$\ell $. This observation plays a crucial role in our computations.

In more detail, for a general metric $\langle\cdot,\cdot\rangle_{t,\vec s}$ on $W^7_{1,1}$ with $s_1=s_2$, Proposition~\ref{prop:RicEigAW} shows that $r_2=r_3$, see also Equation~\eqref{EQ:Ricci_eigen_txs} below. Together with the uniqueness of solutions to the Ricci flow, this implies that if the initial condition in System~\eqref{Eq:Ricci_Flow} satisfies  $s_1^0=s_2^0$, then $s_1(\ell)=s_2(\ell)$ for all $\ell$. Hence the three-parameter family of metrics on $W_{1,1}^7$ of the form $\langle\cdot,\cdot\rangle_{t,\vec s}$ with $s_1=s_2$ remains invariant under the Ricci flow. 

Similarly, if we consider general metrics $\langle\cdot,\cdot\rangle_{t,\vec s}$ on $W^7_{1,1}$ with $t=s_0$ and $s_1=s_2$, then $r_0=r_1$ and $r_2=r_3$, see Equation~\eqref{EQ:Ricci_eigen_ts}. Thus, if $t^0=s_0^0$ and $s_1^0=s_2^0$ in System~\eqref{Eq:Ricci_Flow}, then $t(\ell)=s_0(\ell)$ and $s_1(\ell)=s_2(\ell)$ for all $\ell$. Hence, the two-parameter family of metrics on $W_{1,1}^7$ of the form $\langle\cdot,\cdot\rangle_{t,t, s, s}$  remains invariant under the Ricci flow as well. The latter also follows from the fact that the metrics $\langle\cdot,\cdot\rangle_{t,t,s,s}$ on $W_{1,1}^7$ are precisely those which are additionally right $\u{2}$-invariant and that Ricci flow preserves the isometry group.

The upshot of considering these two families is that the corresponding subset of metrics with $\sec>0$  is manageable to describe, and in fact very simple in the case of the two-parameter family by Theorem~\ref{THM:cone_U2_invariant}. Moreover, for these two families of metrics, the Ricci flow reduces to an ODE system in two or three variables instead of the four variables in System~\eqref{Eq:Ricci_Flow}, making it easier to analyze.

In Section~\ref{S:3_param_nearby_spaces} we treat each family separately and  find metrics  on $W_{1,1}^7$, within each family  which have $\sec>0$ and lose this property when evolved by the Ricci flow. More precisely, we prove the following two theorems.

\begin{theorem}[Two-parameter family]\label{THM:2-parameter}
There exist $\su{3}$-invariant and right $\u{2}$-invariant metrics $\langle\cdot,\cdot\rangle_{t,t,s,s}$ on the Aloff--Wallach space $W_{1,1}^7$ which have positive sectional curvature and evolve under the Ricci flow to metrics with  negatively curved $2$-planes.
\end{theorem}

Theorem~\ref{THM:2-parameter} implies Theorem~\ref{THM:main_thm_B13} for $W_{1,1}^7$.

\begin{theorem}[Three-parameter family]\label{THM:3-parameter}
There exist $\su{3}$-invariant metrics $\langle\cdot,\cdot\rangle_{t,s_0,s,s}$ on the Aloff--Wallach space $W_{1,1}^7$ which have positive sectional curvature and evolve under the Ricci flow to metrics with non-positively curved $2$-planes.
\end{theorem}

\begin{remark}
   To prove Theorem~\ref{THM:main_thm_AW}, we primarily need to prove Theorem~\ref{THM:3-parameter}, and then apply a continuity argument.  However, since Theorem~\ref{THM:2-parameter} shows the non-invariance of the set of metrics with $\sec>0$ on $W^7_{1,1}$ under the Ricci flow within a smaller subset of positively curved metrics, and moreover the proof is short, we consider this case as well. 
\end{remark}

\section{Two- and three-parameter families,  nearby spaces, and the general case}\label{S:3_param_nearby_spaces}

This section is divided in five subsections. In the first two subsections we discuss the two- and three-parameter families of metrics on $W_{1,1}^7$ and give the proofs of Theorems~\ref{THM:2-parameter} and \ref{THM:3-parameter}, respectively. In the third subsection, we draw the phase portraits of the (normalized) Ricci flow of the two- and three-parameter families. In the fourth subsection we prove Theorem~\ref{THM:main_thm_AW}. Finally, we show, through a computer-assisted proof, that the results of Theorem~\ref{THM:main_thm_AW}  hold for all Aloff--Wallach spaces admitting positive curvature.

\subsection{Two-parameter family} Here we prove Theorem~\ref{THM:2-parameter} (see Remark~\ref{Rem:Einstein_Ricci} for an alternative proof). 

\begin{proof}[Proof of Theorem~\ref{THM:2-parameter}]
We consider metrics on $W_{1,1}^7$ of the form $\langle\cdot,\cdot\rangle_{t,t,s,s}$. Recall from Theorem~\ref{THM:cone_U2_invariant} that $\sec>0$ if and only if $t<s$, and has negatively curved $2$-planes if $t>s$. To prove Theorem~\ref{THM:2-parameter}, we investigate the Ricci flow of the metric $\langle\cdot,\cdot\rangle_{1,1,1,1}$.

From Proposition~\ref{prop:RicEigAW} we see that the eigenvalues of the Ricci tensor of metrics $\langle\cdot,\cdot\rangle_{t,t,s,s}$ are given by:
\begin{equation}\label{EQ:Ricci_eigen_ts}
r_0=r_1=\frac{2s^2+t^2}{t s^2},\qquad r_2=r_3=\frac{3(4s-t)}{2s^2}.
\end{equation}
Let  $g(\ell):=\langle\cdot,\cdot\rangle_{t(\ell),t(\ell),s(\ell),s(\ell)}$ be the solution to the Ricci flow with the initial metric $g(0)=\langle\cdot,\cdot\rangle_{1,1,1,1}$. 
The Ricci flow is determined by the ODE system
\begin{equation}\label{EQ:flow_2_variables}
\begin{cases}
t'(\ell)=-2r_0t(\ell),\\
s'(\ell)=-2r_2s(\ell),\\
(t(0),s(0))=(1,1).
\end{cases}
\end{equation}
This system has a unique solution in an open interval $(-L,L)$ for some $L>0$. We claim that 
\begin{equation}\label{EQ:derivative_2param}
\frac{d}{d\ell}\left(\frac{t(\ell)}{s(\ell)}\right)\Bigr|_{\ell=0}>0.
\end{equation}
This implies that there exists $\bar L>0$ with $\bar L\leq L$ such that $t(\ell)<s(\ell)$ for all $\ell\in (-\bar L,0)$ and $t(\ell)>s(\ell)$ for all $\ell\in (0,\bar L)$. Thus, any metric $g(\ell)$ with $\ell\in (-\bar L,0)$ has $\sec>0$ and evolves to metrics with negatively curved $2$-planes.

It remains to prove the claim in Equation~\eqref{EQ:derivative_2param}. For simplicity, we drop the variable $\ell$ from the equation. Differentiating and using Equations~\eqref{EQ:Ricci_eigen_ts} and \eqref{EQ:flow_2_variables} we get:
$$
\frac{d}{d\ell}\left(\frac{t}{s}\right)=\frac{t^\prime s-ts^\prime}{s^2}=\frac{-2r_0 ts +2tr_2 s}{s^2}=\frac{-4s^2-5t^2+12ts}{s^2}.
$$
At $\ell=0$ we have $t=s=1$ and hence the derivative equals $3$.
\end{proof}

\subsection{Three-parameter family}\label{SS:3_param} 

Here we consider metrics on $W_{1,1}^7$ of the form $\langle\cdot,\cdot\rangle_{t,s_0,s_1,s_2}$ with $s_1=s_2$.  We denote $s:=s_1=s_2$ for simplicity. Also, in order to avoid any confusion between $s$ and $s_0$, we set $x:=s_0$. Before giving the proof of Theorem~\ref{THM:3-parameter}, let us explain how the background given in Section~\ref{S:prelim} is adapted to this family of metrics.

First, from Proposition~\ref{prop:RicEigAW} we see that the eigenvalues of the Ricci tensor of $\langle\cdot,\cdot\rangle_{t,x,s,s}$ on $W_{1,1}^7$ are:
\begin{equation}\label{EQ:Ricci_eigen_txs}
r_0 =\frac{t(2s^2+x^2)}{x^2 s^2},\qquad r_1=\frac{4xs^2-2ts^2+x^3}{x^2s^2},\qquad r_2=r_3=\frac{12s-t-2x}{2s^2}.
\end{equation}

We also need the value of $t_A(x,s,s,1,1)$. The matrix $\tilde A(x,s,s)$ is given by
$$
\tilde{A}(x,s,s)=
\begin{pmatrix}
\frac{4}{x} &\frac{ x-3s}{s^2} &\frac{ x-3s}{s^2}\\
\frac{ x-3s}{s^2} & \frac{4}{s}& -\frac{2}{s}\\
\frac{ x-3s}{s^2}&-\frac{2}{s} & \frac{4}{s}\\
\end{pmatrix}.
$$
Its inverse, which we simply denote by $\tilde{A}^{-1}$, equals 
$$
\tilde{A}^{-1}=\frac{1}{(s-x)^2(4s-x)}
\begin{pmatrix}
s^3x & \frac{s^2x(3s-x)}{2} & \frac{s^2x(3s-x)}{2}\\
 \frac{s^2x(3s-x)}{2} & \frac{s(16s^3-9s^2x+6sx^2-x^3)}{12}& \frac{s(8s^3+9s^2x-6sx^2+x^3)}{12}\\
 \frac{s^2x(3s-x)}{2} &  \frac{s(8s^3+9s^2x-6sx^2+x^3)}{12}   & \frac{s(16s^3-9s^2x+6sx^2-x^3)}{12}  \\
\end{pmatrix}.$$
In this case $v:=v(x,s,s,1,1)=\Big(-\frac{2}{x\sqrt{6}},\frac{1}{s\sqrt{6}},\frac{1}{s\sqrt{6}}\Big)^t$. A simple computation shows that
\begin{equation}\label{EQ:A_-1_tilde_v}
\tilde{A}^{-1}v=\frac{1}{\sqrt{6}(s-x)(4s-x)}\begin{pmatrix}
sx-2s^2\\
-s^2\\
-s^2\\
\end{pmatrix}.
\end{equation}
Using Equation~\eqref{EQ:definition_tA} we arrive to the expression
\begin{equation}\label{EQ:t_A_submersion}
t_A(x,s,s,1,1)=\frac{x(4s-x)}{3s}.
\end{equation}

\begin{remark}\label{Re:AW_metrics}
Note that working with the metrics of the form  $\langle\cdot,\cdot\rangle_{t, x, s, s}$ has another advantage besides the simplicity of the computations: from Lemma~\ref{LEM:omega_open} we know that $(x,s,s)\in\Omega_{\hat\lambda}^\prime$ for any $x\in(0,s)$. 
\end{remark}

For later reference, we state the following immediate consequence of Theorem~\ref{THM:Omega}, which actually holds for all $W^{7}_{k_{1},k_{2}}$ with $k_{1}k_{2}(k_{1}+k_{2})\neq 0$.

\begin{lemma}\label{LEM:metrics_near_the_boundary}
Fix $s>0$. Then, for any $x\in (0,s)$ the following hold:
\begin{enumerate}
\item If $t\in(0,t_A(x,s,s,k_1,k_2))$, then the metric $\langle\cdot,\cdot\rangle_{t, x, s,s}$ on $W^{7}_{k_{1},k_{2}}$ is positively curved.
\item If $t\geq t_A(x,s,s,k_1,k_2)$, then the metric  $\langle\cdot,\cdot\rangle_{t, x, s,s}$ on $W^{7}_{k_{1},k_{2}}$ has non-positively curved $2$-planes.
\end{enumerate}   
\end{lemma}

Now we are ready to prove Theorem~\ref{THM:3-parameter}. 

\begin{proof}[Proof of Theorem~\ref{THM:3-parameter}]
We begin the proof by choosing a suitable initial metric on $W_{1,1}^7$  for the Ricci flow. Throughout this proof we denote $t_A(x,s,s,1,1)$ simply by $t_A(x,s,s)$. We fix $x^{0}\in(0.8,1)$, and we define the metric $g:=\langle\cdot,\cdot\rangle_{t_A(x^{0},1,1), x^0, 1, 1}$.

Let $g(\ell):=\langle\cdot,\cdot\rangle_{t(\ell),x(\ell),s(\ell),s(\ell)}$ be the solution to the Ricci flow with the initial metric $g(0)=g$. The Ricci flow is determined by the ODE system
\begin{equation}\label{EQ:flow_3_variables}
\begin{cases}
t'(\ell)=-2r_0t(\ell),\\
x'(\ell)=-2r_1x(\ell),\\
s'(\ell)=-2r_2s(\ell),\\
(t(0),x(0),s(0))=(t_A(x^0,1,1),x^0,1).
\end{cases}
\end{equation}
This system has a unique solution in an open interval $(-L,L)$ for some $L>0$.

We claim that 
\begin{equation}\label{EQ:derivative}
\frac{d}{d\ell}\left(\frac{t_A(x(\ell),s(\ell),s(\ell))}{t(\ell)}\right)\Bigr|_{\ell=0}<0.
\end{equation}
This implies that there exists $L_1>0$ with $L_1\leq L$ such that the following properties hold:
\begin{itemize}
\item[(P1)] $t_A(x(\ell),s(\ell),s(\ell))>t(\ell)$ for all $\ell\in (-L_1,0)$,
\item[(P2)] $t_A(x(\ell),s(\ell),s(\ell))<t(\ell)$ for all $\ell\in (0,L_1)$.
\end{itemize}
Now note that, because $\frac{x(0)}{s(0)}=x^{0}\in (0.8,1)$, there exists some positive $L_2\leq L$ such that:
\begin{itemize}
\item[(P3)] $\frac{x(\ell)}{s(\ell)}\in (0.8,1)$ for all $\ell\in (-L_2,L_2)$. In particular, we have $x(\ell)<s(\ell)$, for all $\ell\in (-L_2,L_2)$.
\end{itemize}
By defining $\bar L=\min\{L_1,L_2\}$, it follows that (P1), (P2) and (P3) hold for all $\ell\in (-\bar L,\bar L)$. By Lemma~\ref{LEM:metrics_near_the_boundary}, the metrics $g(\ell)$ are positively curved for $\ell\in (-\bar L,0)$ and have non-positively curved $2$-planes for $\ell\in (0,\bar L)$. In summary, any metric $g(\ell)$ with $\ell\in (-\bar L,0)$ has $\sec>0$ and evolves to metrics with non-positively curved $2$-planes.

It remains to prove that Equation~\eqref{EQ:derivative} holds. For simplicity, we drop the variable $\ell$ from the functions $t(\ell),x(\ell),s(\ell)$ in the following computations. Recall from Equation~\eqref{EQ:t_A_submersion} the value of $t_A(x,s,s)$. At an arbitrary time $\ell$ we get
\begin{equation}\label{EQ:derivative_of_f}
\frac{d}{d\ell}\left(\frac{t_A(x,s,s)}{t}\right)=\frac{d}{d\ell}\left(\frac{x(4s-x)}{3st}\right)=\frac{B}{3s^2t^2},
\end{equation}
where $B$ also depends on $\ell$ and is defined as
$$
B:=2st(2sx'+2s'x-xx')-x(4s-x)(s't+st').
$$
Substituting the derivatives $t',x',s'$ by the Ricci flow equations given by System~\eqref{EQ:flow_3_variables} we get
$$
B=2txs\left( r_0(4s-x)+r_1(2x-4s)-r_2 x\right).
$$
Replacing $r_0,r_1,r_2$ by their values from Equation~\eqref{EQ:Ricci_eigen_txs} we obtain
$$
B=\frac{t}{xs}\left( 32ts^3 -12txs^2 +8stx^2 -tx^3 +16x^2s^2 -32xs^3 -20sx^3 +6x^4 \right).
$$
Define $D(\ell):=\frac{x(\ell)s(\ell)}{t(\ell)}B(\ell)$. Then
\begin{equation}\label{EQ:derivative_f}
\frac{d}{d\ell}\left(\frac{t_A(x,s,s)}{t}\right)=\frac{D(\ell)}{3txs^3},
\end{equation}
so the sign of the derivative agrees with the sign of $D(\ell)$. Thus, in order to show the claim in Equation~\eqref{EQ:derivative}, we need to prove that $D(0)<0$. Recall that at time $\ell=0$ we have
$$
(t(0),x(0),s(0))=(t_A(x^0,1,1),x^0,1)=\left(\frac{x^0(4-x^0)}{3},x^0,1\right),
$$
which yields
$$
D(0)=\frac{x^0}{3}\left( 32-32x^0-16(x^0)^2+6(x^0)^3+(x^0)^4\right).
$$
Consider the polynomial 
$$p(z)=\frac{z}{3}\left( 32-32z-16z^2+6z^3+z^4\right).$$
The roots of $p$ can be computed with the help of a computer. They are all real and are given by:
$$
z_1\approx -7.48,\qquad z_2=-2,\qquad z_3=0,\qquad z_4\approx 0.79,\qquad z_5\approx 2.69,
$$
where $z_1,z_4,z_5$ are approximations up to two digits. It turns out that $p$ is negative in the open interval $(z_4,z_5)$. In particular, $p(x^0)<0$ because $x^0$ has been chosen in the interval $(0.8,1)$. Since $p(x^0)=D(0)$, the claim in Equation~\eqref{EQ:derivative} follows.
\end{proof}

\subsection{Phase Portrait}\label{SS:PP}
To illustrate the  evolution of metrics $\langle\cdot,\cdot\rangle_{t,x,s,s}$ on $W^7_{1, 1}$ under the Ricci flow, we provide the phase portrait of the normalized Ricci flow of such metrics in Figures~\ref{Fig:PP_2} and \ref{Fig:PP_3}, for the two-, respectively, three-parameter families. 

Recall that the normalized Ricci flow of a Riemannian manifold $(M,g(0))$ is given by the equation
$$
\frac{\partial}{\partial \ell} g(\ell)=-2\Ric(g(\ell))+\frac{2r}{n}g(\ell),\qquad \text{ with } r=r(g(\ell))=\frac{\int_M S_g du_g}{\int_M du_g},
$$
where $S_g$ denotes the scalar curvature of $g=g(\ell)$ and $du_g$ is the volume element of $g(\ell)$. Note that the solutions to the Ricci flow and to the normalized Ricci flow of a manifold are rescalings of each other, so for our purposes of examining whether $\sec>0$, it is irrelevant which one we consider. 

If $(M,g)$ is homogeneous, the scalar curvature $S_g$ is a constant function and thus $r=S_g$. The constant $S_g$ equals the trace of the Ricci tensor. In the case of the metrics $\langle\cdot,\cdot\rangle_{t,x,s,s}$ on $W^7_{1, 1}$, we have that $n=7$ and
\begin{align}\label{Eq: Scal_W_11}
    S_g=r_0+2r_1+4r_2=\frac{-2ts^2-tx^2-2x^3+24sx^2+8xs^2}{x^2s^2}.
\end{align}
Since the volume remains constant under the normalized flow, we may assume that  $tx^2s^4=1$. Whence, we have $t=x^{-2}s^{-4}$. The normalized Ricci flow system reduces then to a system with two equations as follows: 
    \begin{equation}\label{EQ:PP}
        \begin{aligned}
            {x}^{\prime}&=\frac{-40x ^{3}s ^{6}+24s ^{2}-18x ^{5}s ^{4}-2x ^{2}+48x ^{4}s ^{5}}{7x ^{3}s^{6}},\\
          s^{\prime}&=\frac{-36x ^{4}s ^{5}+16x ^{3}s ^{6}+10x ^{5}s ^{4}+5x ^{2}-4s ^{2}}{7x ^{4}s ^{5}}.
        \end{aligned}
    \end{equation}
In Figures~\ref{Fig:PP_2} and \ref{Fig:PP_3} the horizontal axis represents the variable $x$, while the vertical axis represents the variable $s$. We have three disjoint regions as follows:
\begin{align*}
   \mathcal{G}&=\left\{(x, s)\in \mathbb R^2_+\mid\, x<s,\, \,x^{-2}s^{-4}<\frac{x(4s-x)}{3s}\right\}\\
    &=\{(x, s)\in \mathbb R^2_+\mid\, x<s,\,\, 3<4x^3s^4-x^4s^3\},\\
    \mathcal{P}&=\{(x, s)\in \mathbb R^2_+\mid\,  3\geq 4x^3s^4-x^4s^3\},\\
    \mathcal{W}&=\{(x, s)\in \mathbb R^2_+\mid\, x\geq s, \, \,3<4x^3s^4-x^4s^3\}. \\
\end{align*}
The set $\mathcal G$ is depicted in \emph{light green}, $\mathcal{P}$ in \emph{pink}, and $\mathcal{W}$ in \emph{white}.
The boundary of $\mathcal G$ is given by $s=x$ and $4x^3s^4-x^4s^3=3$, represented by the \emph{blue} line and the \emph{red} curve, respectively. 

Each point $(x, s) \in \mathbb{R}^2_+$ represents the metric $g(x, s) :=\langle\cdot,\cdot\rangle_{t,x,s,s}$ with $t=x^{-2}s^{-4}$ on $W^7_{1, 1}$. These regions are distinguished by our knowledge of the sign of the sectional curvature of the corresponding metrics. Namely, if $(x, s) \in \mathcal{G}$, then $g(x, s)\in \Omega$, i.e. $g(x, s)$ has $\sec > 0$; if $(x, s) \in \mathcal{P}$, then $g(x, s)$ has $2$-planes with non-positive sectional curvature; and if $(x, s) \in \mathcal{W}$, we do not have precise information about the sign of the sectional curvature.

In Figures~\ref{Fig:PP_2}(A) and \ref{Fig:PP_3}(A) we  specify two points   
$$E^{(+)}=\left(\frac{2}{5}\sqrt[7]{\frac{125}{8}}, \sqrt[7]{\frac{125}{8}}\right),\text{ in \emph{green}},\qquad E^{(-)}=\left(2\sqrt[7]{\frac{1}{8}}, \sqrt[7]{\frac{1}{8}}\right),  \text{ in \emph{magenta}},$$
which correspond to the Einstein metrics on $W_{1, 1}^7$ given in \cite[p. 25]{N04}. Note that by Theorem~\ref{THM:cone_U2_invariant}, the metric corresponding to $E^{(+)}$ has $\sec>0$ and the metric corresponding to  $E^{(-)}$  has $2$-planes with negative curvature. 

Further, let 
\begin{align*}
    \mathcal{C}=\{(x, s)\in \R^2_+\mid\, x^3s^4=1\}.
\end{align*}
This set is illustrated in \emph{purple} (partly covered by a green curve) in Figure~\ref{Fig:PP_2}(A) and represents the two-parameter family of metrics on $W^7_{1,1}$. Note that the two Einstein metrics lie on this curve.

Moreover, in Figures~\ref{Fig:PP_2}(A) and \ref{Fig:PP_3}(A)  we illustrate two solutions to System~\eqref{EQ:PP} with initial values 
$$p_1=\left(\left(\frac{10}{11}\right)^{\frac{4}{3}}, 1.1\right)\in \mathcal{C}\cap \mathcal{G},\qquad p_2=(0.87, 1.1)\in\mathcal{G},$$
respectively, depicted as the \emph{green} curve in Figure~\ref{Fig:PP_2}(A) and as the \emph{black} curve in Figure~\ref{Fig:PP_3}(A). Let us denote the green curve by $\gamma$ and the black curve by $\beta$. Note that $p_1$ represents a metric in the two-parameter family and $p_2$ represents a metric in the three-parameter family.  We remark further that since we assume the volume to remain constant throughout the flow, the initial values that we choose here are different from those we choose for the (unnormalized) Ricci flow in the proof of Theorems~\ref{THM:2-parameter} and \ref{THM:3-parameter}. Notice that the curve $\gamma$ initiating at $p_1\in\mathcal{C}$  remains in $\mathcal{C}$ while evolving, and converges to the point representing the Einstein metric $E^{(-)}$ in positive time (see Remark~\ref{Rem:Einstein_Ricci}). The same phenomenon happens for the curve $\beta$, i.e. it converges to $E^{(-)}$ in positive time. This behavior  indicates that both $\gamma$ and $\beta$ evolve to points corresponding to metrics with negatively curved $2$-planes. 

    In Figures~\ref{Fig:PP_2}(B) and \ref{Fig:PP_3}(B) we depict the same integral curves $\gamma$ and $\beta$ as in Figures~\ref{Fig:PP_2}(A) and \ref{Fig:PP_3}(A), respectively. However, to observe their behaviors  close to the boundary of $\mathcal{G}$, we provide a close-up image. 

Recall from Theorem~\ref{THM:cone_U2_invariant} that a metric $\langle\cdot,\cdot\rangle_{x,x,s,s}$ in the two-parameter family has positive curvature if and only if $x<s$. For the metrics in this family with fixed volume, i.e. those satisfying $x^3s^4=1$, the boundary condition $x=s$  reduces to $(x, s)=(1, 1)$. In Figure~\ref{Fig:PP_2}(B), there exist  $\ell_0,  \ell_1$ with $ 0<\ell_0<\ell_1$, where $\gamma(\ell_0)=(1, 1)$ and  $\gamma(\ell_1)\in \mathcal{P}$. 

Similarly, for the curve $\beta$, there exists $\tilde\ell_0>0$ such that $q=\beta(\tilde\ell_0)\in\mathcal{P}$. This indicates that the corresponding Ricci flow metric at time $\tilde\ell_0$ has a non-positively curved $2$-plane, as proven in Theorem~\ref{THM:3-parameter}.

 \begin{figure}[!tbp]
  \subfloat[]{\includegraphics[width=0.5\textwidth]{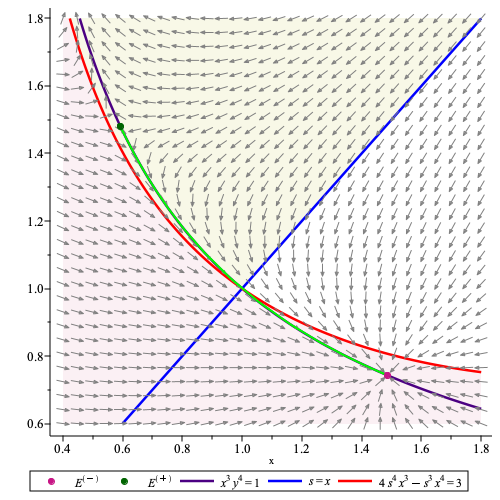}}
  \hfill
  \subfloat[]{\includegraphics[width=0.5\textwidth]{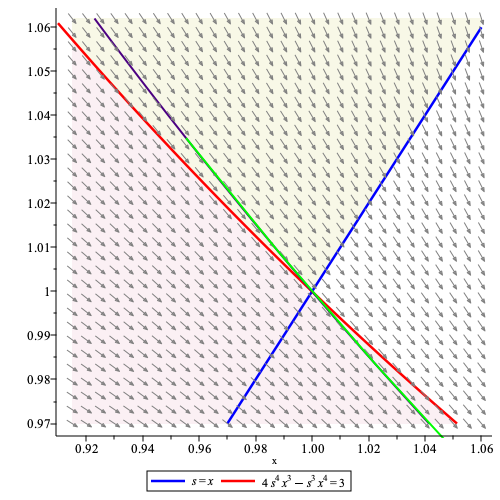}}
  \caption{Two-parameter family}
   \label{Fig:PP_2}
 \end{figure}

\begin{figure}[!tbp]
  \subfloat[]{\includegraphics[width=0.5\textwidth]{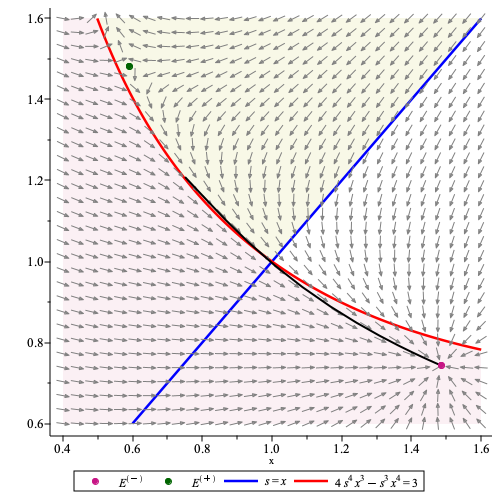}}
  \hfill
  \subfloat[]{\includegraphics[width=0.5\textwidth]{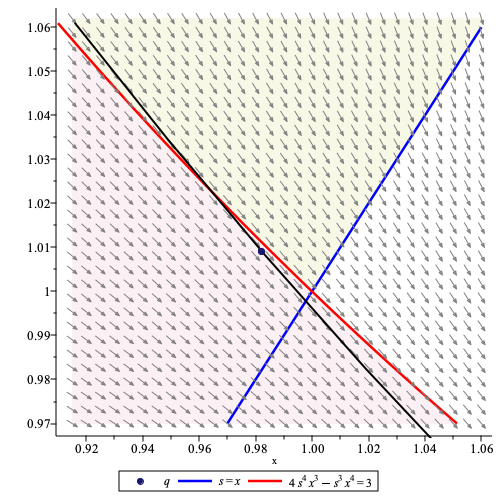}}
  \caption{Three-parameter family}
   \label{Fig:PP_3}
 \end{figure}

\subsection{Nearby spaces}\label{S:Generic_Case}

In this subsection we give the proof of  Theorem~\ref{THM:main_thm_AW}, which relies on the proof of Theorem~\ref{THM:3-parameter} and an analytic argument. To this end, we will consider infinitely many Aloff--Wallach spaces at the same time. It will be then helpful to, roughly speaking, parametrize all of them in a continuous manner. 

Recall that the Ricci eigenvalues $r_i$ from Proposition~\ref{prop:RicEigAW} depend on the parameters $k_1,k_2$, and the same holds for the vector $v$ and the map $t_A$ from Equations~\eqref{EQ:vector_v} and \eqref{EQ:definition_tA}. Clearly, the expressions for $r_i$, $v$ and $t_A$ make sense for $k_1,k_2$ not necessarily in $\mathbb N$. Moreover, observe that all of these values remain invariant under rescaling $\lambda k_1,\lambda k_2$ of $k_1,k_2$. This suggests to identify each pair $k_1,k_2\in\R_+$ with the quotient $\xi=\frac{k_1}{k_2}$. In this way, Aloff--Wallach spaces $W_{k_1,k_2}^7$ correspond to rational numbers $\xi\in (0,1]$; recall that we have assumed $\gcd (k_1, k_2)=1$ and $0<k_1\leq k_2$. Conversely, each $\xi\in (0,1]\cap\mathbb Q$ can be written as $\xi=\frac{k_1}{k_2}$ with $k_1,k_2\in\mathbb N$, $0<k_1\leq k_2$ and $\gcd(k_1,k_2)=1$, hence it corresponds to the Aloff--Wallach space $W_{k_1,k_2}^7$, which we may simply denote by $W_\xi^7$. In particular, the space $W_{1,1}^7$ corresponds to $\xi=1$.

We extend the maps $t_A$, $v$ and $r_i$ so that they depend on $\xi$ instead of $k_1,k_2$, where $\xi$ is any real number in the interval $(0,1]$. More precisely, $v:\R^3_+\times (0,1]\to\R^3$ is defined as the smooth map
$$ v(\vec s, \xi)=\left(-\frac{1+\xi}{s_0\sqrt{2(\xi^2+\xi+1)}}, \frac{\xi}{s_1\sqrt{2(\xi^2+\xi+1)}},\frac{1}{s_2\sqrt{2(\xi^2+\xi+1)}}\right).$$
The map $t_A$ extends to $t_A:D_\sigma\times \left(0,1\right]\to \R$ defined as
\begin{equation}\label{EQ:t_A-ksi}
   t_A(\vec s,\xi)=
            \frac{2}{9}\langle v(\vec s,\xi),\tilde A^{-1}(\vec s)v(\vec s,\xi) \rangle^{-1}. 
\end{equation}

Observe that $\tilde A^{-1}(\vec s)$ is a rational matrix in the variables $(s_0,s_1,s_2)$, so $t_A$ is smooth on $D_\sigma\times \left(0,1\right]$.

The Ricci eigenvalues $r_i$ extend to smooth maps $r_i:\R_+^4\times (0,1]$ as
\begin{align*}
 r_0 (t,\vec s,\xi) &= \frac{3t}{2(\xi^2+\xi+1)} \left( \frac{(\xi+1)^2}{s_0^2} + \frac{\xi^2}{s_1^2} + \frac{1}{s_2^2}\right),\\
 r_1 (t,\vec s,\xi)&= \frac{6}{s_0} -\frac{3(\xi+1)^2 t}{2(\xi^2+\xi+1) s_0^2} + \left( \frac{s_0}{s_1s_2} - \frac{s_1}{s_0s_2} - \frac{s_2}{s_0s_1}\right),\\
 r_2 (t,\vec s,\xi)&= \frac{6}{s_1} -\frac{3\xi^2t}{2(\xi^2+\xi+1) s_1^2} + \left( \frac{s_1}{s_0s_2} - \frac{s_0}{s_1s_2} - \frac{s_2}{s_0s_1}\right), \\
 r_3 (t,\vec s,\xi)&= \frac{6}{s_2} -\frac{3 t}{2(\xi^2+\xi+1) s_2^2} + \left( \frac{s_2}{s_0s_1} - \frac{s_0}{s_1s_2} - \frac{s_1}{s_0s_2}\right).
\end{align*}

For the rest of the section, we fix $x^0\in (0.8, 1)$ and let  
\begin{equation}\label{EQ:def_of_p}
p=\left(t_A\left(x^0, 1, 1, 1\right), x^0,1,1,1\right)=\left(\frac{x^0(4-x^0)}{3}, x^0, 1, 1, 1\right), 
\end{equation}
where $t_A\left(x^0, 1, 1, 1\right)$ is defined by Equation~\eqref{EQ:t_A-ksi}, with $\vec s=(x^0, 1, 1)$ and $\xi=1$.

We also define the map
\begin{align}
F\colon \R_+\times D_{\sigma}\times \textstyle{\left(0, 1\right]}&\to \mathbb{R}\label{EQ:map_F}\\
(t, s_0, s_1, s_2, \xi)&\mapsto \frac{t_A(s_0, s_1, s_2, \xi)}{t}. \nonumber
\end{align}

Denote by $g(\ell):=\langle\cdot,\cdot\rangle_{t(\ell),x(\ell),s(\ell),s(\ell)}$ the solution to the Ricci flow on $W_{1,1}^7$ with the initial metric $g(0)=\langle\cdot,\cdot\rangle_{t_A(x^{0},1,1), x^0, 1, 1}$. We identify $g(\ell)$ with the quadruple of functions $(t(\ell),x(\ell),s(\ell),s(\ell))$ and 
consider the map $f$ defined as
\begin{equation}\label{EQ:def_f}
    f(\ell)=F\left(g(\ell),1\right).
\end{equation}

 The  differential $dF_p:T_pV\cong\R^5\to T_1\R\cong \R$, where $V$ denotes the domain $\R_+\times D_{\sigma}\times \textstyle{\left(0, 1\right]}$,  is related to the derivative $f^\prime(0)$ as follows:
\begin{equation*}
dF_p (g^\prime(0),0)=f^\prime(0),
\end{equation*}
where $g^\prime(0)=(t^\prime(0),x^\prime(0),s^\prime(0),s^\prime(0))$. Now note that:
$$
f'(0)= \frac{d}{d\ell}\left(F\left(g(\ell),1\right)\right)\Bigr|_{\ell=0}=
\frac{d}{d\ell}\left(\frac{t_A(x(\ell),s(\ell),s(\ell),1)}{t(\ell)}\right)\Bigr|_{\ell=0}.
$$
Recall that the right-hand side is negative, as we have shown in Equation~\eqref{EQ:derivative}. We conclude that 
\begin{equation}\label{EQ:derivative_new_notation}
dF_p (g^\prime(0),0)<0.
\end{equation}

Now we are ready to state the technical result which implies Theorem~\ref{THM:main_thm_AW}.

\begin{theorem}\label{THM:convergence}
Let $\{\xi_n\}_{n\in\mathbb N}$ be any sequence converging to $1$ with $\xi_n\in \mathbb Q\cap  \left(0, 1\right]$. Then there is some $n_0$ such that all Aloff--Wallach spaces $W_{\xi_n}$ with $n\geq n_0$ have a $\sec >0$ metric which evolves under the Ricci flow to a metric with non-positively curved $2$-planes.    
\end{theorem}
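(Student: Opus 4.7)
The plan is to extend the argument from $\xi=1$ (i.e., Theorem~\ref{THM:3-parameter} on $W^7_{1,1}$) to all $\xi_n$ sufficiently close to $1$ by a continuity argument leveraging smooth dependence of ODE solutions on parameters. Fix some $x\in(0.8,1)$. For each $\xi=k_1/k_2\in(0,1]\cap\mathbb{Q}$, consider the initial metric
$$
\tilde p_\xi:=(t_A(x,1,1,\xi),\,x,\,1,\,1)
$$
on $W^7_\xi=W^7_{k_1,k_2}$. For $\xi=1$ this is precisely the boundary metric $p$ used in the proof of Theorem~\ref{THM:3-parameter}. Since $(x,1,1)$ has $s_1=s_2$, it lies in $\Omega'_{\hat\lambda}(k_1,k_2)$ by~\eqref{EQ:AW}, and the choice of $t$ places $\tilde p_\xi$ on the boundary of $\Omega(k_1,k_2)$.

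Let $g_\xi(\ell)=(t(\ell),s_0(\ell),s_1(\ell),s_2(\ell))$ be the Ricci flow solution starting at $\tilde p_\xi$, governed by~\eqref{Eq:Ricci_Flow} with eigenvalues $r_i(t,\vec s,\xi)$. Because the $r_i$, $t_A$, and $\tilde p_\xi$ are smooth in $\xi$, the standard smooth-dependence theorem supplies a uniform existence interval $(-L,L)$ on a neighborhood of $\xi=1$, with $g_\xi$ smooth in $(\ell,\xi)$. Define $h_\xi(\ell):=F(g_\xi(\ell),\xi)$ with $F$ as in~\eqref{EQ:map_F}. By construction $h_\xi(0)=1$ for every $\xi$, so $\xi\mapsto h_\xi'(0)$ is continuous. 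At $\xi=1$ it agrees with the quantity $f'(0)$ computed in the proof of Theorem~\ref{THM:3-parameter}, which is strictly negative for our $x$. Hence there is an open neighborhood $\Xi$ of $1$ in $(0,1]$ on which $h_\xi'(0)<0$.

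Finally, choose $n_0$ so that $\xi_n\in\Xi$ for all $n\geq n_0$ and fix such $n$. Negativity of $h_{\xi_n}'(0)$ yields some $\bar L>0$ with $t(\ell)<t_A(s_0(\ell),s_1(\ell),s_2(\ell),\xi_n)$ on $(-\bar L,0)$ and the opposite inequality on $(0,\bar L)$. Openness of $\Omega'_{\hat\lambda}(k_1,k_2)$ (Remark~\ref{REM:triples_in_Omega}) combined with continuity of $g_{\xi_n}$ at $\ell=0$ lets us shrink $\bar L$ so that $(s_0(\ell),s_1(\ell),s_2(\ell))\in\Omega'_{\hat\lambda}(k_1,k_2)$ throughout $(-\bar L,\bar L)$. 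Then Theorem~\ref{THM:Omega} yields $\sec>0$ for $g_{\xi_n}(\ell)$ on $(-\bar L,0)$, while the extension of Lemma~\ref{LEM:metrics_near_the_boundary} to $W^7_{k_1,k_2}$ (which is immediate from Theorem~\ref{THM:Omega}) provides a non-positively curved plane for $\ell\in(0,\bar L)$. By uniqueness of the Ricci flow, the positively curved metric $g_{\xi_n}(-\varepsilon)$ for any small $\varepsilon>0$ evolves to a metric with a non-positively curved plane, as claimed. The main step requiring care is the continuous dependence of $h_\xi'(0)$ on $\xi$ up to $\xi=1$, which relies on the smooth extensions of $r_i$, $v$, and $t_A$ already prepared together with the standard smooth-dependence theorem for ODEs; symmetry breaking at $\xi\neq1$ (where $s_1(\ell)\neq s_2(\ell)$ for $\ell\neq 0$) is harmless thanks to the openness of $\Omega'_{\hat\lambda}(k_1,k_2)$.
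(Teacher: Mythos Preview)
Your proposal is correct and follows essentially the same approach as the paper: fix $x\in(0.8,1)$, place the initial metric at $(t_A(x,1,1,\xi),x,1,1)$ on the boundary of $\Omega(\xi)$, and use continuity of the derivative $h_\xi'(0)$ in $\xi$ together with the negativity at $\xi=1$ established in Theorem~\ref{THM:3-parameter}, then conclude via Theorem~\ref{THM:Omega} and the openness of $\Omega'_{\hat\lambda}$. The only minor difference is that the paper argues the convergence $f_n'(0)\to f'(0)$ directly from $p_n\to p$ and smoothness of $r_i$ and $F$ (since $g_n'(0)$ is determined algebraically by the initial data), whereas you invoke the smooth-dependence theorem for ODEs; your route is slightly heavier machinery but equally valid.
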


 We warn the reader that the sequence $\{\xi_n\}_{n\in\mathbb N}$ is not related to the functions $\xi_0,\xi_1,\xi_2$ appearing in the proof of Lemma~\ref{LEM:omega_open}.
 
\begin{proof}
For each sequence $\{\xi_n\}_{n\in\mathbb N}$ as in the statement we define 
$$
p_n=(t_A( x^0,1,1,\xi_n),x^0,1,1,\xi_n),
$$
where $x^0\in (0.8,1)$ has been fixed in the definition of $p$, see Equation~\eqref{EQ:def_of_p}. Recall from Lemma~\ref{LEM:omega_open} that $(x^0,1,1)\in \Omega_{\hat\lambda}^\prime(\xi_n)$ for all $n$. Note that, because $t_A$ is smooth, the sequence $\{p_n\}_{n\in\mathbb N}$ converges to $p$.

We think of the first four coordinates of $p_n$ and $p$ as the corresponding metric $\langle\cdot,\cdot\rangle_{t,\vec s}$ on $W_{\xi_n}^7$ and on $W_1^7$, respectively.  We denote their Ricci flows by $g_{n}(\ell)=\langle\cdot,\cdot\rangle_{t^{n}(\ell), s^{n}_{0}(\ell), s_{1}^{n}(\ell), s_{2}^{n}(\ell)}$ and $g(\ell)=\langle\cdot,\cdot\rangle_{t(\ell), x(\ell), s(\ell), s(\ell)}$, respectively. They are determined by the following systems, where we drop $\ell$ from the notation for brevity:
\begin{align}\label{Eq:Ricci_Flow_sequence}
&\begin{cases}
(t^n)^\prime=-2r_0(t^n,s_0^n,s_1^n,s_2^n,\xi_n) t^n,\\
(s_j^n)'=-2r_{j+1}(t^n,s_0^n,s_1^n,s_2^n,\xi_n) s_j^n, \text{ for all } j\in\{0,1,2\},\\
(t^n(0),s_0^n(0),s_1^n(0),s_2^n(0),\xi_n)=p_n.
\end{cases}\\ \notag\\
\label{Eq:Ricci_Flow_limit}
&\begin{cases}
t^\prime=-2r_0(t,s_0,s_1,s_2,1) t,\\
s_j'=-2r_{j+1}(t,s_0,s_1,s_2,1) s_j, \text{ for all } j\in\{0,1,2\},\\
(t(0),s_0(0),s_1(0),s_2(0),\xi)=p.
\end{cases}
\end{align}
These systems have a unique solution in an open interval $(-L^n,L^n)$ and $(-L,L)$ for some $L^n>0$ and $L>0$, respectively. 

Since $r_{i}$'s are smooth functions on $\R_+^4\times(0,1]$ and $p_n$ converges to $p$, the right-hand side of System~\eqref{Eq:Ricci_Flow_sequence} converges to the right-hand side of System~\eqref{Eq:Ricci_Flow_limit}. Consequently, the same holds for the left-hand sides, so $g_{n}'(0)$ converges to $g'(0)$. 

Denote $f_n(\ell)=F(g_n(\ell),\xi_n)$, where $F$ is the map from Equation~\eqref{EQ:map_F}. Then $f_n^\prime(0)=dF_{p_n}(g_n^\prime (0),0)$. Due to all the convergences that have been discussed above, it follows that $f_n^\prime(0)$ converges to $f^\prime(0)$, where $f$ is the map in Equation~\eqref{EQ:def_f}. Since $f^\prime(0)<0$, we conclude that there is some $n_0$ such that for all $n\geq n_0$ it holds that
$$
f_n^\prime(0)<0.
$$
It follows that for each $n\geq n_0$ there is a positive time $L_1^n\leq L^n$ such that $f_n(\ell)>1$ for $\ell\in (-L_1^n,0)$ and $f_n(\ell)<1$ for $\ell\in (0,L_1^n)$. Equivalently, the following properties hold for any $n\geq n_0$:
\begin{itemize}
\item[(P1)] $t_A(s_0^n(\ell),s_1^n(\ell),s_2^n(\ell), \xi_n)>t^n(\ell)$ for all $\ell\in (-L_1^n,0)$,
\item[(P2)] $t_A(s_0^n(\ell),s_1^n(\ell),s_2^n(\ell), \xi_n)<t^n(\ell)$ for all $\ell\in (0,L_1^n)$.
\end{itemize}

Note also that, because $(x^0,1,1)\in \Omega_{\hat\lambda}^\prime(\xi_n)$ for all $n$ and $\Omega_{\hat\lambda}^\prime(\xi_n)$ is an open set (see Lemma~\ref{LEM:omega_open}), there exists some positive $L_2^n\leq L^n$ such that:
\begin{itemize}
\item[(P3)] $(s_0^n(\ell),s_1^n(\ell),s_2^n(\ell))\in\Omega_{\hat\lambda}^\prime(\xi_n)$ for all $\ell\in (-L_2^n,L_2^n)$.
\end{itemize}

By defining $\bar L^n=\min\{L^n_1,L^n_2\}$, it follows that for any $n\geq n_0$, (P1), (P2) and (P3) hold for all $\ell\in (-\bar L^n,\bar L^n)$. By Theorem~\ref{THM:Omega}, the metrics $g_n(\ell)$  on $W^{7}_{\xi_n}$ are positively curved for $\ell\in (-\bar L^n,0)$ and have non-positively curved $2$-planes for $\ell\in (0,\bar L^n)$. In summary, any metric $g_n(\ell)$ with $\ell\in (-\bar L^n,0)$ has $\sec>0$ and evolves to metrics with non-positively curved $2$-planes. 
\end{proof}

\subsection{General Aloff--Wallach Spaces}\label{S:computer_assisted}

In this subsection we prove that all Aloff--Wallach spaces $W^7_{\xi}$ with $\xi\in(0, 1]\cap \mathbb Q$ have $\sec>0$ metrics which evolve under the Ricci flow to metrics with non-positively curved $2$-planes. We use the notation from Section~\ref{S:Generic_Case}. Some computations are done with Maple, due to their complexity.

For each $\xi\in(0, 1]\cap \mathbb Q$ and each $x\in (0,1)$, let $g$ denote the metric $\langle\cdot,\cdot\rangle_{t_A(x,1,1,\xi),x,1,1}$ on $W_\xi$, and let  $g_\xi(\ell)=\langle\cdot,\cdot\rangle_{t(\ell),s_0(\ell),s_1(\ell),s_2(\ell)}$ be the Ricci flow with initial condition $g_\xi(0)=g$, which exists in some interval $(-L_\xi,L_\xi)$. Although $g$ depends on $x$ and $\xi$ (and $g_\xi(\ell)$ on $x$), we do not explicitly denote it, since there is no risk of confusion.

We denote by $F_\xi:\R_+\times D_\sigma\to\R$ the map $F$ from Equation~\eqref{EQ:map_F} restricted to $\R_+\times D_\sigma\times\{\xi\}$. Consider the smooth map 
$$
f_{\xi}(\ell)=F_\xi(g_\xi(\ell)).
$$
We are going to show that for each $\xi\in(0, 1]\cap \mathbb Q$ there is some $x\in (0,1)$ for which $f_{\xi}^\prime(0)<0$. This, as explained in the proof of Theorem~\ref{THM:convergence}, implies that there is some positive $\bar L_\xi\leq L_\xi$ such that the metric $g_\xi(\ell)$  on $W^{7}_{\xi}$ is positively curved for $\ell\in (-\bar L_\xi,0)$ and have non-positively curved $2$-planes for $\ell\in (0,\bar L_\xi)$. In summary, any metric $g_\xi(\ell)$ with $\ell\in (-\bar L_\xi,0)$ has $\sec>0$ and evolves to metrics with non-positively curved $2$-planes. 

Note that $f_{\xi}^\prime(0)=\langle \nabla_g F_\xi, g^\prime_\xi(0)\rangle$, where $\nabla_g F_\xi$ denotes the value at $g$ of the gradient
$$
\nabla F_\xi=\left(\frac{\partial F_\xi}{\partial t},  \frac{\partial F_\xi}{\partial s_0}, \frac{\partial F_\xi}{\partial s_1} , \frac{\partial F_\xi}{\partial s_2}\right).
$$
The computation of $\frac{\partial F_\xi}{\partial s_i}$ involves the derivative of $t_A$ and hence of $\tilde A^{-1}$ with respect to $s_i$. As explained in Section~\ref{SS:special_spaces_metrics}, the expression of $\tilde A^{-1}$ for arbitrary triples $\vec s$ is complicated. In order to simplify the computations as much as possible, we use the fact that 
\begin{equation}\label{eq:Inverse_Matrix_Der}
\frac{\partial \tilde A^{-1}}{\partial s_i}(x, 1,1)=-\tilde A^{-1}(x, 1, 1)\frac{\partial \tilde A}{\partial s_i}(x, 1,1)\tilde A^{-1}(x, 1, 1). 
\end{equation} 
This formula easily follows from the observation that the derivative of $I=\tilde A\tilde A^{-1}$ vanishes. The upshot of Equation~\eqref{eq:Inverse_Matrix_Der} is that we only have to compute $\tilde A^{-1}$ at $(x,1,1)$, which we have already done in Section~\ref{SS:3_param}, and that we have the expression for $\tilde A$ at hand, so we can compute $\frac{\partial \tilde A}{\partial s_i}$ directly and then evaluate at $(x,1,1)$.

Using Equation~\eqref{eq:Inverse_Matrix_Der} and the fact that $\tilde A^{-1}$ is a symmetric matrix, we find that
\begin{align}\label{EQ:Partial_Der}
\frac{\partial F_\xi}{\partial s_i}(g)
    =-\frac{2}{3x(4-x)}\left(\frac{2\langle \frac{\partial v}{\partial s_i}, \tilde A^{-1}v_\xi\rangle-\langle \tilde A^{-1}v_\xi, \frac{\partial \tilde A}{\partial s_i}(\tilde A^{-1}v_\xi)\rangle}{\langle v_\xi, \tilde A^{-1}v_\xi\rangle^2}\right),
    \end{align}
where $\frac{\partial v}{\partial s_i}$ is evaluated at $(x,1,1,\xi)$, the matrices $\tilde A^{-1}$ and $\frac{\partial \tilde A}{\partial s_i}$ are evaluated at $(x,1,1)$, and $v_\xi$ denotes $v(x,1,1,\xi)$.

Let us compute the terms appearing in Equation~\eqref{EQ:Partial_Der}. Along the way we will indicate which computations have been done with Maple. First we introduce some notations to simplify the expressions. Let
$$P_i:= \langle \frac{\partial v}{\partial s_i}, \tilde A^{-1}v_\xi\rangle,\qquad Q_i:=\langle \tilde A^{-1}v_\xi, \frac{\partial \tilde A}{\partial s_i}(\tilde A^{-1}v_\xi)\rangle,\qquad L=\langle v_\xi, \tilde A^{-1}v_\xi\rangle.$$ 

Then Equation~\eqref{EQ:Partial_Der} can be written as: 
\begin{align}\label{EQ:Partial_Der_2}
    \frac{\partial F_\xi}{\partial s_i}(g)
    &=-\frac{2}{3x(4-x)}\left(\frac{2P_i-Q_i}{L^2}\right).
    \end{align}

First we compute the term which is involved in all of $L,P_i,Q_i$:
    \begin{equation}\label{EQ:W}
       W:=\tilde A^{-1}v_\xi
       =\frac{1}{( x^{2}-5 x +4)\sqrt{2(\xi^2+\xi+1)  } }\begin{pmatrix}
\frac{(x -2) (\xi +1)}{2} \\
 \frac{(\xi -1) x^{2}+(-5 \xi +5) x -2 \xi -10}{12} \\
-\frac{(\xi -1) x^{2}+(-5 \xi +5) x +10 \xi +2}{12} \\
\end{pmatrix}.
\end{equation}
We use it to obtain
\begin{equation*}\label{EQ:L}
L=\langle v_\xi,W\rangle=\frac{x^{2} (\xi-1)^{2}-4 x (\xi-1)^2-12(\xi+1)^2}{24 x  (x-4) (\xi^{2}+\xi+1)}.
\end{equation*}
Next, we differentiate  $v$ with respect to $s_i$ and evaluate at $(x, 1, 1)$:   
   \begin{align*}
       \frac{\partial v}{\partial s_0}(x, 1,1)=\left(\frac{1+\xi}{x^2\sqrt{2(\xi^2+\xi+1)}}, 0, 0\right), 
       \\
\frac{\partial v}{\partial s_1}(x, 1,1)=\left( 0,-\frac{\xi}{\sqrt{2(\xi^2+\xi+1)}}, 0\right),
\\
\frac{\partial v}{\partial s_2}(x, 1,1)=\left( 0, 0, -\frac{1}{\sqrt{2(\xi^2+\xi+1)}}\right).
   \end{align*}     
We can now compute $P_i=\langle \frac{\partial v}{\partial s_i}, W\rangle$: 
   \begin{align*}
      P_0&=\frac{(\xi+1)^{2} (x-2)}{4 (\xi^{2}+\xi+1) x^{2} (x-1) (x-4)},  
      \\[7pt]
 P_1&=-\frac{((\xi -1) x^{2}+(-5 \xi +5) x -2 \xi -10) \xi}{24 (\xi^{2}+\xi+1) (x-1) (x-4)},  
 \\[7pt]
 P_2&=\frac{(\xi-1) x^{2}+(-5\xi+5) x+10 \xi+2}{24 (\xi^{2}+\xi+1) (x-1) (x-4)}. 
  \end{align*}
Next, we differentiate $\tilde A$ with respect to $s_i$ and evaluate at $(x, 1, 1)$:
 \begin{align*}
\frac{\partial \tilde A}{\partial s_0}(x, 1,1)&=\begin{pmatrix}
-\frac{4}{x^{2}} & 1 & 1 
\\
 1 & 0 & 0 
\\
 1 & 0 & 0 
\end{pmatrix},\\ 
\frac{\partial \tilde A}{\partial s_1}(x, 1,1)&=\begin{pmatrix}
0 & \frac{-x^{2}+x+1}{x} & -\frac{(x-1)^{2}}{x} 
\\
 \frac{-x^{2}+x+1}{x} & -4 & 1 
\\
 -\frac{(x-1)^{2}}{x} & 1 & 0 
\end{pmatrix},\\
\frac{\partial \tilde A}{\partial s_2}(x, 1,1)&=
\begin{pmatrix}
0   
& -\frac{(x-1)^{2}}{x} & \frac{-x^{2}+x+1}{x}
\\
 -\frac{(x-1)^{2}}{x} & 0 & 1 
\\
 \frac{-x^{2}+x+1}{x} & 1 & -4 \end{pmatrix}.
\end{align*}

Let us denote $U_i:=\frac{\partial \tilde A}{\partial s_{i}}(W)$. Then we have
\begin{align*}
U_0&=\frac{1}{( x^{2}-5 x +4)\sqrt{2(\xi^2+\xi+1)  } }\begin{pmatrix}
-\frac{(x^{2}+2 x-4) (\xi+1)}{x^2} 
\\
 \frac{(x-2) (\xi+1)}{2} 
\\
 \frac{(x-2) (\xi+1)}{2} 
\end{pmatrix}, 
\\
U_1&=\frac{1}{( x^{2}-5 x +4)\sqrt{2(\xi^2+\xi+1)  } }\begin{pmatrix}
-\frac{(\xi-1) x^{3}+(-19 \xi-5) x^{2}+(32 \xi+4) x-8 \xi+8}{12x} 
\\
 \frac{(-11 \xi-1) x^{3}+(43 \xi-7) x^{2}+(-8 \xi+32) x-12 \xi-12}{12x} 
\\
 \frac{(-5 \xi-7) x^{3}+(19 \xi+29) x^{2}+(-32 \xi-40) x+12 \xi+12}{12x} 
\end{pmatrix}, 
\\
U_2&=\frac{1}{( x^{2}-5 x +4)\sqrt{2(\xi^2+\xi+1)  } }\begin{pmatrix}
\frac{(\xi-1) x^{3}+(5 \xi+19) x^{2}+(-4 \xi-32) x-8 \xi+8}{12x} 
\\
 \frac{(-7 \xi-5) x^{3}+(29 \xi+19) x^{2}+(-40 \xi-32) x+12 \xi+12}{12x} 
\\
 -\frac{(\xi+11) x^{3}+(7 \xi-43) x^{2}+(-32 \xi+8) x+12 \xi+12}{12x} 
\end{pmatrix}. 
  \end{align*}

  Lastly, compute $Q_i=  \langle W, U_i\rangle$: 
  \begin{align}
 Q_0=& -\frac{(x+2) (\xi+1)^{2} (x-2)}{2 x^{2} (x-4)^{2} (x-1) (\xi^{2}+\xi+1)},\nonumber 
 \\[5pt]
 Q_1=&\frac{-(\xi-1)^{2} x^{4}+(7 \xi^{2}-18 \xi+11) x^{3}+(24 \xi^{2}+96 \xi-24) x^{2}}{48 x (x-4)^{2} (x-1) (\xi^{2}+\xi+1)} \label{EQ:Q_1}\\[5pt]
  \nonumber
&+ \frac{(-116 \xi^{2}-152 \xi+28) x+32 \xi^{2}-32}{48 x (x-4)^{2} (x-1) (\xi^{2}+\xi+1)}, \\[5pt]
   Q_2=&\frac{-(\xi-1)^{2} x^{4}+(11 \xi^{2}-18 \xi+7) x^{3}+(-24 \xi^{2}+96 \xi+24) x^{2}}{48 x (x-4)^{2} (x-1) (\xi^{2}+\xi+1)} \label{EQ:Q_2}
   \\[5pt]  \nonumber
   &+\frac{(28 \xi^{2}-152 \xi-116) x-32 \xi^{2}+32}{48 x (x-4)^{2} (x-1) (\xi^{2}+\xi+1)}. 
  \end{align}
Equations~\eqref{EQ:Q_1} and \eqref{EQ:Q_2} have been computed with Maple.

Having computed $L$, $P_i$ and $Q_i$, we use Equation~\eqref{EQ:Partial_Der_2} to compute the partial derivatives of $F_\xi$.

\begin{align}
    \frac{\partial F_\xi}{\partial t}(g) =&\frac{-48 \xi^{2}-48 \xi-48}{x(x-4) ((\xi-1)^{2} x^{2}-4 (\xi-1)^{2} x-12 (\xi+1)^{2}) },\nonumber \\[6pt]
    \frac{\partial F_\xi}{\partial s_0}(g) =& \frac{384 (\xi+1)^{2} (x-2) (\xi^{2}+\xi+1)}{x(x-4) ((\xi-1)^{2} x^{2}-4 (\xi-1)^{2} x-12 (\xi+1)^{2})^{2} }, \label{EQ:der_F_s0}\\[6pt]
    \frac{\partial F_\xi}{\partial s_1}(g)=&\frac{-24 (\xi^{2}+\xi+1) ((\xi^{2}-\frac{2}{3} \xi-\frac{1}{3}) x^{4}+(-\frac{29}{3} \xi^{2}+6 \xi+\frac{11}{3}) x^{3}}{(x-4) (x-1) ((\xi-1)^{2} x^{2}-4 (\xi-1)^{2} x-12 (\xi+1)^{2})^{2}}\label{EQ:der_F_s1}\\[6pt]  \nonumber 
&+\frac{(32 \xi^{2}-8 \xi-8) x^{2}+(-28 \xi^{2}+\frac{8}{3} \xi+\frac{28}{3})x+\frac{32 \xi^{2}}{3}-\frac{32}{3})}{(x-4) (x-1) ((\xi-1)^{2} x^{2}-4 (\xi-1)^{2} x-12 (\xi+1)^{2})^{2}}, \\[6pt]
\frac{\partial F_\xi}{\partial s_2}(g)=& \frac{8 (\xi^{2}+\xi +1) ((\xi^{2}+2 \xi -3) x^{4}+(-11 \xi^{2}-18 \xi +29) x^{3}}{(x -4) (x -1) ((\xi -1)^{2} x^{2}-4 (\xi -1)^{2} x -12 (\xi +1)^{2})^{2}} \label{EQ:der_F_s2}\\[6pt] \nonumber 
&+\frac{(24 \xi^{2}+24 \xi -96) x^{2}+(-28 \xi^{2}-8 \xi +84) x +32 \xi^{2}-32)}{(x -4) (x -1) ((\xi -1)^{2} x^{2}-4 (\xi -1)^{2} x -12 (\xi +1)^{2})^{2}}.
\end{align}

Equations~\eqref{EQ:der_F_s0}, \eqref{EQ:der_F_s1} and \eqref{EQ:der_F_s2} have been computed with Maple.

The last piece of information we need to compute $f_\xi^\prime(0)$ is the Ricci flow equation at time $0$, that is,   $g^\prime_\xi(0)=(t^\prime(0),s_0^\prime(0), s_1^\prime(0), s_2^\prime(0) )$, which is given by
\begin{align*}
    t^\prime(0)&=-\frac{(x^{4}-8 x^{3}+16 x^{2}) (\frac{(\xi+1)^{2}}{x^{2}}+\xi^{2}+1)}{3 (\xi^{2}+ \xi+1)}, \\
    s_0^\prime(0)&=
 -8+\frac{(\xi+1)^{2} (4-x)}{\xi^{2}+\xi+1}-2 x^{2}, \\
 s_1^\prime(0)&=-12+\frac{\xi^{2} (-x^{2}+4 x)}{\xi^{2}+\xi+1}+2 x,\\
 s_2^\prime(0)&=-12+\frac{-x^{2}+4 x}{\xi^{2}+\xi+1}+2 x.
\end{align*}

At this stage we have all the materials to compute $ f_{\xi}^\prime(0)=\langle \nabla_g F_\xi, g^\prime_\xi(0)\rangle$. Using Maple we obtain:

\begin{equation}\label{EQ:DER}
  f_{\xi}^\prime(0)=\frac{K(\xi, x)}{x(x-4) (x-1) ((\xi-1)^{2} x^{2}-4 (\xi-1)^{2} x-12 (\xi+1)^{2})^{2}},
\end{equation}

where 
\begingroup
\addtolength{\jot}{1em}
\begin{align*}
  K(\xi, x)=&40 (\xi-1)^{2} \left(\xi^{2}+\frac{4}{5} \xi+1\right) x^{7}-568 (\xi-1)^{2} \left(\xi^{2}+\frac{60}{71} \xi+1\right) x^{6}\\\nonumber
    &+{(2960 \xi^{4}-3552 \xi^{3}+416 \xi^{2}-3552 \xi+2960) x^{5}}\\\nonumber
    &+{(-7664 \xi^{4}+6912 \xi^{3}-2336 \xi^{2}+6912 \xi-7664) x^{4}}\\\nonumber
    &+{(9856 \xi^{4}-3968 \xi^{3}+5120 \xi^{2}-3968 \xi+9856) x^{3}}\\\nonumber
    &+{(-5312 \xi^{4}+6144 \xi^{3}+10624 \xi^{2}+6144 \xi-5312) x^{2}}\\\nonumber
    &+256 (\xi^{2}-50 \xi+1) (\xi+1)^{2} x+6144 \xi (\xi+1)^{2}.
\end{align*}
\endgroup

 Let us analyze the sign of the quotient in Equation~\eqref{EQ:DER}. First, note that the denominator is positive for any $x<1$. As for the numerator, we compute that the left-sided limit 
$$
\lim_{x\to 1^-} K(\xi, x)=-432(\xi^2-1)^2
$$
is negative for all $\xi\in (0,1)$. It follows that for each $\xi\in (0,1)$ there is some $x=x(\xi)<1$ sufficiently close to $1$ such that $K(\xi, x)<0$. Hence, we have found $x$ for which $f_{\xi}^\prime(0)<0$, as desired.

In the case $\xi=1$, numerator and denominator have $768(x-1)$ as a common factor. When simplifying, one arrives to the expression 
$$
f_1^\prime(0)=\frac{x^4+6x^3-16x^2-32x+32}{3x(x-4)}.
$$
Of course, this coincides with the expression in Equation~\eqref{EQ:derivative_f} for $f_1^\prime(0)$ obtained by hand in the proof of Theorem~\ref{THM:3-parameter}, where we have shown that $f_1^\prime(0)<0$ if $x\in(0.8,1)$.

\section{The Berger space}\label{SEC:Berger}

Recall that the Berger space $B^{13}$ is the homogeneous space $\su{5}/\sp{2}\mathsf{S}^1$. The subgroup $\sp{2}\mathsf{S}^1$ is defined as the image of $\sp{2}\times\mathsf{S}^1$ under the homomorphism
$$
\iota:\su{4}\times\mathsf{S}^1 \to\u{4}\subset\su{5},\qquad \iota(A,\lambda)=\lambda A.
$$
Endow $\su{5}$ with the bi-invariant metric $\llangle X,Y\rrangle =-\frac{1}{2}\text{Re}\tr (XY)$, for $X,Y\in\g{su}_5$. Let $\lieP$ be the orthogonal complement of $\g{sp}_2\oplus\R$, the Lie algebra of $\sp{2}\mathsf{S}^1$. The space $\lieP$ decomposes as $\lieP_1\oplus\lieP_2$, where $\lieP_1$ denotes the orthogonal complement of $\g{sp}_2$ in $\g{su}_4$ and $\lieP_2$ is the orthogonal complement of $\g{su}_4\oplus\R$ in $\g{su}_5$. As noted in \cite[Lemma~7.9]{Pue99}, all $\su{5}$-invariant metrics are of the form
$$
\langle \cdot, \cdot\rangle_{x_1,x_2}=x_1\llangle\cdot,\cdot\rrangle|_{\lieP_1}+ x_2\llangle\cdot,\cdot\rrangle|_{\lieP_2}.
$$
P\"uttmann computed in \cite[Theorem~7.17]{Pue99} the extremal curvatures $\langle \cdot, \cdot\rangle_{x_1,x_2}$, see also \cite[Corollary~7.18]{Pue99}. After doing the corresponding identifications between P\"uttmann's notation for the metrics (see the bottom of \cite[p.~677]{Pue99}) and ours we arrive to the following theorem.

\begin{theorem}[P\"uttmann]\label{THM:B13_cone_sec}
The metric $\langle \cdot, \cdot\rangle_{x_1,x_2}$ on $B^{13}$ has $\sec>0$ if and only if $x_1<2x_2$. Moreover, the metric $\langle \cdot, \cdot\rangle_{x_1,x_2}$ has negatively curved $2$-planes if $x_1>2x_2$.
\end{theorem}

Now we discuss the Ricci tensor of our metrics. By dimensional reasons, $\lieP_1$ and $\lieP_2$ are inequivalent as $\Ad(\sp{2}\mathsf{S}^1)$-representations. It follows that the Ricci tensor of $\langle \cdot, \cdot\rangle_{x_1,x_2}$ has a diagonal form 
\begin{equation*}
\Ric=x_1r_1 \llangle\cdot,\cdot\rrangle|_{ \lieP_1}+ x_2r_2\llangle\cdot,\cdot\rrangle|_{\lieP_2}.
\end{equation*}
The eigenvalues $r_1,r_2$ can be computed using Equation~\eqref{eqn:Ric_eig}. This has been done by Rodionov in \cite[Corollary~2.1]{Ro92}, and for the metrics $\langle \cdot, \cdot\rangle_{x_1,1}$ with $x_2=1$ he obtained:
\begin{equation}\label{Eq:B13-Ricci}
   r_1=\frac{8+x_1^2}{x_1},\qquad r_2=\frac{5(8-x_1)}{4}. 
\end{equation}

We are ready to prove Theorem~\ref{THM:main_thm_B13} for $B^{13}$. 

\begin{proof}[Proof of Theorem~\ref{THM:main_thm_B13} for $B^{13}$]
Let $g(\ell)=\langle \cdot, \cdot\rangle_{x_1(\ell),x_2(\ell)}$ the Ricci flow of the initial metric $g(0)=\langle \cdot, \cdot\rangle_{2,1}$, which is determined by the system
\begin{equation}\label{EQ:flow_B13}
\begin{cases}
x_1^\prime(\ell)=-2r_1x_1(\ell),\\
x_2^\prime(\ell)=-2r_2x_2(\ell),\\
(x_1(0),x_2(0))=(2,1).
\end{cases}
\end{equation}
This system has a unique solution in an open interval $(-L,L)$ for some $L>0$. We claim that 
\begin{equation}\label{EQ:derivative_B13}
\frac{d}{d\ell}\left(\frac{x_1(\ell)}{2x_2(\ell)}\right)\Bigr|_{\ell=0}>0.
\end{equation}
This implies that there exists $\bar L>0$ with $\bar L\leq L$ such that $x_1(\ell)<2x_2(\ell)$ for all $\ell\in (-\bar L,0)$ and $x_1(\ell)>2x_2(\ell)$ for all $\ell\in (0,\bar L)$. By Theorem~\ref{THM:B13_cone_sec}, the metrics $g(\ell)$ are positively curved for $\ell\in (-\bar L,0)$ and have negatively curved $2$-planes for $\ell\in (0,\bar L)$. In summary, any metric $g(\ell)$ with $\ell\in (-\bar L,0)$ has $\sec>0$ and evolves to metrics with negatively curved $2$-planes.

It remains to prove the claim in Equation~\eqref{EQ:derivative_B13}. Dropping $\ell$ from the equations we have
$$
\frac{d}{d\ell}\left(\frac{x_1}{2x_2}\right)=\frac{x_1^\prime x_2-x_1 x_2^\prime}{2x_2^2}=\frac{-2r_1x_1x_2+2r_2x_1x_2}{2x_2^2}{=\frac{x_1}{x_2}(r_2-r_1)},
$$
where we have used the equations in System~\eqref{EQ:flow_B13}. At $\ell=0$, we have $(x_1(0),x_2(0))=(2,1)$, so $r_1=6$ and $r_2=\frac{15}{2}$ by Equation~\eqref{Eq:B13-Ricci}. It follows that the derivative equals 3.
\end{proof}

\begin{remark}\label{Rem:Einstein_Ricci}
    Note that Theorem~\ref{THM:main_thm_B13} can be alternatively proven via the fact that both $W^7_{1, 1}$  and $B^{13}$  admit precisely two Einstein metrics, one with positive sectional curvature and one with $2$-planes with negative sectional curvature,  see \cite[Lemma~2]{N04} for $W^7_{1,1}$  and \cite[Theorem~3.1]{Ro92} for $B^{13}$ (see also Theorems~\ref{THM:cone_U2_invariant} and \ref{THM:B13_cone_sec} for the sign of sectional curvatures).

    Consider the normalized Ricci flow  of the two-parameter  metrics $\langle\cdot,\cdot\rangle_{t, t, s, s}$  on $W^{7}_{1, 1}$ and the metrics $\langle\cdot,\cdot\rangle_{x_1, x_2}$ on $B^{13}$.  In both cases the normalized Ricci flow is then given by one equation in one variable. Then there exists a normalized Ricci flow solution $g(\ell)$, defined on the maximal time interval $(L_{\min}, L_{\max})$  with the property that 
    $$\lim_{\ell \to L_{\min}}g(\ell)=g_1,$$
    $$\lim_{\ell\to L_{\max}}g(\ell)=g_2,$$   
     where $g_1$ is  the Einstein metric on $W^7_{1,1}$ or $B^{13}$ with $\sec>0$ and $g_2$ is the Einstein metric which has $2$-planes with negative curvature.

\end{remark}


\begin{thebibliography}{18}
\bibitem[AN16]{AN16}
Abiev, Nurlan Abievich; Nikonorov, Yurii  Gennadievich. \emph{The
  evolution of positively curved invariant {R}iemannian metrics on the   {W}allach spaces under the {R}icci flow}. Ann. Global Anal. Geom. \textbf{50}   (2016), no.~1, 65--84. 

\bibitem[AW75]{AW75} Aloff, Simon; Wallach, Nolan R. \emph{An infinite family of distinct $7$-manifolds admitting positively curved Riemannian structures}. Bull. Amer. Math. Soc. \textbf{81} (1975), 93--97.

\bibitem[Be87]{Be87} Besse, Arthur L. \emph{Einstein manifolds}. Reprint of the 1987 edition. Classics in Mathematics. Springer-Verlag, Berlin, 2008.

\bibitem[BK23]{BeK23}
Bettiol, Renato G.; Krishnan, Anusha M. \emph{Ricci flow does not preserve   positive sectional curvature in dimension four}. Calc. Var. Partial   Differential Equations \textbf{62} (2023), no.~1, Paper No. 13, 21.


\bibitem[BW07]{BW07}
B\"{o}hm, Christoph; Wilking, Burkhard. \emph{Nonnegatively curved manifolds
  with finite fundamental groups admit metrics with positive {R}icci
  curvature}. Geom. Funct. Anal. \textbf{17} (2007), no.~3, 665--681.

\bibitem[BW08]{BW08}
\bysame, \emph{Manifolds with positive curvature operators are space forms}.
  Ann. of Math. (2) \textbf{167} (2008), no.~3, 1079--1097. 

\bibitem[CW15]{CW15}
Cheung, Man-Wai; Wallach, Nolan R. \emph{Ricci flow and curvature on the
  variety of flags on the two dimensional projective space over the complexes,
  quaternions and octonions}. Proc. Amer. Math. Soc. \textbf{143} (2015),
  no.~1, 369--378. 

\bibitem[GZ24]{GZ24} González-Álvaro, David; Zarei, Masoumeh. \emph{Positive intermediate curvatures and Ricci flow}. To appear in Proc. Amer. Math. Soc. https://doi.org/10.1090/proc/16752.

\bibitem[Ha82]{Ham82}
Hamilton, Richard S. \emph{Three-manifolds with positive {R}icci curvature}. J. Differential Geometry \textbf{17} (1982), no.~2, 255--306. 

\bibitem[Ha86]{Ham86} \bysame, \emph{Four-manifolds with positive curvature operator}. J.   Differential Geom. \textbf{24} (1986), no.~2, 153--179. 


\bibitem[La13]{La13} Lauret, Jorge. \emph{Ricci flow of homogeneous manifolds.} Math. Z. \textbf{274} (2013), no. 1-2, 373--403.

\bibitem[Ni14]{Ni14} Ni, Lei. \emph{Ricci flow and manifolds with positive curvature}. Symmetry: representation theory and its applications, 491--504, Progr. Math., 257, Birkhäuser/Springer, New York, 2014.

\bibitem[Ni04]{N04} Nikonorov, Yu. G., \emph{Compact homogeneous {E}instein 7-manifolds}. Geom. Dedicata \textbf{109} (2004), 7--30.

\bibitem[Pue99]{Pue99} P\"uttmann, Thomas. \emph{Optimal pinching constants of odd-dimensional homogeneous spaces}. Invent. Math. \textbf{138} (1999), no. 3, 631--684.

\bibitem[Ri14]{Ri14} Richard, Thomas. \emph{Curvature cones and the Ricci flow}. Séminaire de théorie spectrale et géométrie 31 (2012-2014): 197-220. 

\bibitem[Ro92]{Ro92} Rodionov, E. D. \emph{Homogeneous Einstein metrics on an exceptional Berger space.}(Russian); translated from Sibirsk. Mat. Zh. \textbf{33} (1992), no. 1, 208--211, 222 Siberian Math. J. \textbf{33} (1992), no. 1, 171--174.


   

\bibitem[Wa82]{W82} Wang, McKenzie Y. \emph{Some examples of homogeneous {E}instein manifolds in dimension seven}. Duke Math. J. \textbf{49} (1982), no. 1, 23--28.

\bibitem[WZ86]{WZ86} Wang, McKenzie Y.; Ziller, Wolfgang. \emph{Existence and nonexistence of homogeneous Einstein metrics}. Invent. Math. \textbf{84} (1986), no. 1, 177--194.

\bibitem[Wi13]{Wi13} Wilking, Burkhard. \emph{A {L}ie algebraic approach to {R}icci flow invariant curvature conditions and {H}arnack inequalities}. J. Reine Angew. Math. 679 (2013),  223--247. 


\bibitem[Zi07]{Zi07} 
Ziller, Wolfgang. \emph{Examples of Riemannian manifolds with non-negative sectional curvature}. Surveys in differential geometry. Vol. XI, 63--102, Surv. Differ. Geom., 11, Int. Press, Somerville, MA, 2007.

\end{thebibliography}
\end{document}